\def\final{0}  
\def\iflong{\iffalse}
\newcommand{\kristof}[1]{{\color{red}[{ \textbf{Kristóf:}  #1}]\marginpar{\color{red}*}}}
\newcommand{\marci}[1]{{\color{blue}[{ \textbf{Marci:}  #1}]\marginpar{\color{blue}*}}}
\newcommand{\llaci}[1]{{\color{orange}[{ \textbf{LLaci:}  #1}]\marginpar{\color{orange}*}}}
\newcommand{\tlaci}[1]{{\color{purple}[{ \textbf{TLaci:}  #1}]\marginpar{\color{purple}*}}}
\newcommand{\kristof}[1]{}
\newcommand{\marci}[1]{}
\newcommand{\llaci}[1]{}
\newcommand{\tlaci}[1]{}
\theoremstyle{plain}
\newtheorem{thm}{Theorem}[section]
\newtheorem{lem}[thm]{Lemma}
\newtheorem{cor}[thm]{Corollary}
\theoremstyle{definition}
\newtheorem{ex}[thm]{Example}
\newtheorem{rem}[thm]{Remark}
\newcommand{\R}{\mathbb{R}}
\newcommand{\Z}{\mathbb{Z}}
\newcommand{\N}{\mathbb{N}}
\def\Gb{\mathbf{G}}
\def\Nbb{\mathbb{N}}
\newcommand{\cA}{\mathcal{A}}
\newcommand{\cB}{\mathcal{B}}
\newcommand{\cS}{\mathcal{S}}
\newcommand{\cC}{\mathcal{C}}
\newcommand{\cF}{\mathcal{F}}
\newcommand{\FF}{\mathcal{F}}
\newcommand{\cH}{\mathcal{H}}
\newcommand{\cK}{\mathcal{K}}
\newcommand{\cI}{\mathcal{I}}
\newcommand{\cP}{\mathcal{P}}
\newcommand{\PP}{\mathcal{P}}
\newcommand{\cY}{\mathcal{Y}}
\newcommand{\cZ}{\mathcal{Z}}
\newcommand{\rto}{\rightarrowtail}
\newcommand{\fg}{\varphi}
\newcommand{\eps}{\varepsilon}
\newcommand{\Haus}{{\text{\rm Haus}}}
\newcommand{\hto}{\stackrel{\Haus}{\longrightarrow}}
\def\one{{\mathbbm1}}
\def\rk{\text{\rm rk}}
\newcommand{\psih}{\psi_{\cH}}
\newcommand{\psib}{\psi_{\cB}}
\newcommand{\psik}{\psi_{\cK}}
\newcommand{\dH}[1]{d_{\Haus_{2^{#1}}}} 
\title{Quotient-convergence of Submodular Setfunctions}
\author{
Kristóf Bérczi\thanks{MTA-ELTE Matroid Optimization Research Group and HUN-REN–ELTE Egerváry Research Group, Department of Operations Research, Eötvös Loránd University, and HUN-REN Alfréd Rényi Institute of Mathematics, Budapest, Hungary. Email: \texttt{kristof.berczi@ttk.elte.hu}.}
\and
Márton Borbényi\thanks{Department of Computer Science, Eötvös Loránd University  and HUN-REN Alfréd Rényi Institute of Mathematics, Budapest, Hungary. Email: \texttt{marton.borbenyi@ttk.elte.hu}.}
\and
László Lovász\thanks{HUN-REN Alfréd Rényi Institute of Mathematics, Budapest, Hungary. Email: \texttt{laszlo.lovasz@ttk.elte.hu}.}
\and
László Márton Tóth\thanks{HUN-REN Alfréd Rényi Institute of Mathematics, Budapest, Hungary. Email: \texttt{toth.laszlo.marton@renyi.hu}.}
}
\date{}
\begin{document}
\maketitle
\tableofcontents

 \newpage

\begin{abstract} 
We introduce the concept of quotient-convergence for sequences of submodular set functions, providing, among others, a new framework for the study of convergence of matroids through their rank functions. Extending the limit theory of bounded degree graphs, which analyzes graph sequences via neighborhood sampling, we address the challenge posed by the absence of a neighborhood concept in matroids. We show that any bounded set function can be approximated by a sequence of finite set functions that quotient-converges to it. In addition, we explicitly construct such sequences for increasing, submodular, and upper continuous set functions, and prove the completeness of the space under quotient-convergence.

\medskip

\noindent \textbf{Keywords:} Matroid limits, Quotients, Quotient-convergence, Submodular setfunctions

\end{abstract}

\section{Introduction}
\label{sec:intro}

The theory of graph limits provides tools to analyze sequences of graphs, offering insights into the structural similarities of large graphs through analytic methods. This analytic viewpoint helps to understand complex networks and their applications in various fields of mathematics and computer science, such as statistical physics, network theory and probability theory. For graph sequences, convergence of a growing sequence of graphs  
can be defined in two complementary was: in terms of the distributions of small subgraphs sampled randomly from the members of the sequence (left-convergence), and in terms of homomorphisms into small graphs (right-convergence). Under mild conditions, these two notions turn out equivalent (see~\cite{bensch2001recurrence,bckl2013leftandright} for bounded-degree graphs and~\cite{BCLSV1, BCLSV2} for dense graphs). 

Once convergence is defined, there is a natural question to assign a limit object for convergent graph sequences. Depending on the specifics of the sequence, such limit objects can be described as unimodular random locally finite graphs, graphons, graphings, scaling limits, and so on; see~\cite{lovasz2012large} for details. Once a limit object is defined in a given class of structures, the ``soficity'' question arises: is every member of this class a limit object of finite structures? This question can be quite easy (e.g.\ for dense graphs) or a long-standing open problem (e.g. the Aldous--Lyons Conjecture for bounded-degree graphs).

Matroids capture fundamental graph features such as spanning trees, matchings, connectivity and planarity, and thus provide a unified framework for studying graph-related problems; see~\cite{Welsh,oxley2011matroid,Schrijver}. The question of a limit theory for matroids has been raised by several authors. Left-convergence, based on local properties, has been studied in~\cite{kardovs2017first}; this concept focuses on small (finite) subsets. We take the other route and introduce right-convergence, better reflecting macroscopic properties of matroids. One possible axiomatization of matroids is through the rank function, which extends the standard notion of rank of a set of vectors. The most important property of the rank function $\rk$ is that it is submodular: 
\begin{equation*}
  \rk(X)+\rk(Y) \geq \rk(X \cap Y)+\rk(X \cup Y)  
\end{equation*}
for all subsets $X,Y$ of the ground set. 

Limits of matroid rank functions and other important submodular setfunctions from combinatorics are submodular setfunctions on infinite sets, which leads to an analytic study of submodularity. Independently of the combinatorial applications, this research direction was initiated as early as in the 1950s by Choquet~\cite{choquet1954theory}, starting a line of research of ``nonlinear integral'' and related topics; see~\cite{Denn, kechris2012classical}. The study of connections between the analytic and combinatorial theories of submodularity has recently been initiated~\cite{lovasz2023submodular} and a connection with local-global convergence of bounded-degree graphs has been established in~\cite{lovasz2023matroid} and our parallel paper~\cite{bblt2024graphing}. 

\medskip

In this paper, we introduce a form of right-convergence, called \emph{quotient-convergence}, of a sequence of  setfunctions; this provides, in particular, a notion of convergence of matroids through their rank functions (Section~\ref{sec:convergence}). We show that every quotient-convergent sequence of finite increasing submodular setfunctions converges to an increasing submodular setfunction on a Borel space (Section~\ref{sec:complete}), where we call a setfunction {\it finite} if its domain is finite. This implies by standard arguments the completeness of the space of increasing submodular setfunctions under quotient-convergence. We settle the soficity problem by proving that for any bounded setfunction, there exists a sequence of finite setfunctions that quotient-converges to it, and we also give an explicit construction of such a sequence when the setfunction is increasing, submodular and continuous from above (Section~\ref{sec:finite}). 

The limit object is not unique; our construction provides a limiting submodular setfunction that is continuous from below. In fact, the most natural limit object is a submodular setfunction on the clopen subsets of the Cantor set (if we allow setfunctions defined on set-algebras rather than sigma-algebras). The relationship between these representations of the limit is not fully understood.

\section{Quotient-convergence}
\label{sec:convergence}

We need some definitions. Let $\cF$ be a family of subsets of the set $J$. A {\it setfunction} on $(J,\cF)$ is a function $\varphi\colon\cF\to\R$. A {\it lattice family} is a family $\cF$ of subsets of a set $J$ closed under union and intersection. A setfunction $\varphi$ on a lattice family $(J,\cF)$ is {\it submodular}, if 
\begin{equation*}
  \varphi(X)+\varphi(Y) \geq \varphi(X \cap Y)+\varphi(X \cup Y)  
\end{equation*}
for all $X,Y\in\FF$. Submodularity is known to be equivalent to $\varphi(A\cup X)-\varphi(A\cup Y)\geq \varphi(X)-\varphi(Y)$ for all $A,X,Y\in\cB$ with $X\subseteq Y$, also referred to as the {\it diminishing returns} property. In this paper, we will assume that $\fg(\emptyset)=0$.

We need several different smoothness conditions on a setfunction $\varphi$. If $\cF$ is closed under countable union, then we say that $\varphi$ is lower continuous (or continuous from below), if for every sequence $X_1\subseteq X_2\subseteq\ldots$ of sets in $\cF$, if $\varphi(\cup_nX_n)=\lim_n\varphi(X_n)$. {\it Upper continuous} (or continuous from above) is defined analogously. We say that an increasing setfunction $\varphi$ defined on the compact subsets of a topological space is {\it continuous from the right} if for all $\varepsilon>0$ and $K\in\cK$, there exists an open set $U$ such that $\varphi(K')<\varphi(K)+\varepsilon$ for all $K'\subseteq U$, $K'\in\cK$.

\subsection{The quotient profile}

We denote by $\N$ the \emph{set of positive integers}. For $k\in\N$, we define the {\it $k$-quotient set} $Q_k(\fg)$ as the set of all quotients of $\fg$ on $[k]$. In other words, $Q_k(\fg)$ is the set of all setfunctions $\fg\circ F^{-1}$, where $F\colon J\to[k]$ is a measurable map. The set $Q_k(\fg)$ is a subset of $\R^{2^k}$, with the two coordinates $\fg\circ F^{-1}(\emptyset)=0$ and $\fg\circ F^{-1}(J)=\fg(J)$ being fixed. One can think of $F$ as a measurable partition of $J$ into $k$ (possibly empty) parts, and $\varphi\circ F^{-1}$ is a quotient of $\varphi$ by substituting each partition class by a single element. Therefore, we also use the notation $\varphi/\cP$ for denoting the quotient of $\varphi$ corresponding to a partition $\cP$ of $J$. Note that $Q_k(\fg)$ is invariant under the permutations of $[k]$. We call the sequence $(Q_k(\fg)\colon k\in\N)$ the {\it quotient profile} of $\fg$. The sequence of closures $(\overline{Q}_k(\fg)\colon k\in\N)$ is the {\it closed quotient profile} of $\fg$. Often we only need to speak about the {\it quotient set} $Q(\fg)=\cup_k Q_k(\fg)$ of all finite quotients of $\fg$.

The quotient profile contains a lot of information about the setfunction. Let us give some examples.

\begin{ex}[Finitary properties]\label{EXA:FIN-CONV}
Many properties can be expressed in terms of the function values on finite set-algebras contained in $(J,\cB)$: for example, increasing, submodular, infinite-alternating, finitely additive, $0$-$1$ valued. Equivalently, such a property holds for a setfunction $\varphi$ if and only if it holds for every quotient $\varphi/\PP$, where $\PP$ is a partition into a finite number of measurable sets. Such properties are called {\it finitary} in~\cite{bgils2024decomposition}. 
\end{ex}

\begin{ex}[Ideals]\label{EXA:IDEALS}
Every $0$-$1$ valued increasing submodular setfunction on $(J,\cB)$ is of the form $\varphi(X)=\one(X\notin\cI)$, where $\cI$ is a set ideal. Every quotient set $Q_k(\varphi)$ consists of all setfunctions $\psi_{k,A}\colon 2^{[k]}\to\{0,1\}$ where $\psi_{k,A}(X)=\one(A\cap X\not=\emptyset)$ for some nonempty subset $A\subseteq[k]$. Furthermore, among these setfunctions, those defined by maximal ideals are exactly the ones that are finitely additive. This is a finitary property, and so this means that all finite quotients are defined by maximal ideals, i.e., they are of the form $\delta_x$ for a single element $x$. However, the profile does not indicate whether the ideal is principal or not.
\end{ex}

\begin{ex}[Binary matroids]\label{EXA:MATROIDS}
A {\it matroid} $M=(J,\rk)$ is defined by its finite {\it ground set} $J$ and its {\it rank function} $\rk\colon2^J\to\Z$ that satisfies the {\it rank axioms}: (R1) $\rk(\emptyset)=0$, (R2) $X\subseteq Y\Rightarrow \rk(X)\leq \rk(Y)$, (R3) $\rk(X)\leq |X|$, and (R4) $\rk(X)+\rk(Y)\geq \rk(X\cap Y)+\rk(X\cup Y)$. Observe that (R2) asserts that the rank functions is increasing, while (R4) asserts the rank function being submodular. A subset $X\subseteq J$ is {\it independent} if $|X|=\rk(X)$. The \emph{rank} of the matroid is $\rk(J)$.

Let $M=(J,\rk)$ be a matroid and $A\subseteq J$. The \emph{deletion of $A$}, also called the {\it restriction to $J\setminus A$} results in the matroid $M|A=(J\setminus A,\rk')$ with rank function $\rk'(X)=\rk(X)$ for $X\subseteq J\setminus A$. The \emph{contraction of $A$} results in a matroid $M/A=(J\setminus A,\rk'')$ with rank function $\rk''(X)=\rk(X\cup A)-\rk(A)$ for $X\subseteq J\setminus A$. A matroid that can be obtained from $M$ by a sequence of restrictions and contractions is called a \emph{minor} of $M$. 

The quotient profile of the rank function of a matroid carries a lot of structural information about the matroid itself, one particularly interesting example being the representability over the two-element field. A matroid is called \emph{binary} if its elements can be identified with the columns of a $(0,1)$-matrix in such a way that a subset of elements is independent if and only if the corresponding columns are linearly independent in GF(2). Tutte~\cite{tutte1958homotopy} showed that binary matroids admit a forbidden minor characterization: a matroid is binary if and only if it has no minor isomorphic to $U_{4,2}$.

Let $M=(J,\rk)$ be a matroid. Then, by looking at $Q_6(\rk)$, one can decide if the matroid is binary or not. Indeed, by Tutte's characterization, $M$ is binary if and only if its ground set does not admit a partition into six parts $J=J_1\cup \dots\cup J_6$ such that $J_i$ contains a single element for $i=1,2,3,4$, and $\rk\big((\cup_{i\in I} J_i)\cup J_5\big)-\rk(J_5)=\min\{|I|,2\}$ for each $I\subseteq[4]$. In other words, $M$ is binary if and only if $Q_6(\rk)$ does not contain a setfunction $f\colon[6]\to\Z$ with $f(I\cup\{5\})-f(\{5\})=\min\{|I|,2\}$ for each $I\subseteq [4]$.

A property of matroids is {\it minor-closed}, if it is inherited by minors. Every minor-closed property can be defined by excluding a list of minors (quite often, but not always, a finite list). Many fundamental properties of matroids are minor-closed (e.g., linearly representable, graphic, cographic, algebraic). The argument above shows that every minor-closed property described by a finite list of excluded minors can be read off from an appropriate quotient set $Q_k(\rk)$.
\end{ex}

For two setfunctions $\varphi,\varphi'$, we say that $\varphi$ is a {\it lift} of $\varphi'$ if $\varphi'$ is a quotient of $\varphi$. Let $A$ be a family of finite setfunctions. We say that $A$ is {\it quotient-closed}, if every quotient of a function in $A$ is also contained in $A$. We say that $A$ has the {\it common lift property}, if any finite set of functions in $A$ has a common lift in $A$.

We close this section with a simple lemma.

\begin{lem}\label{lem:quotient-prof-lift}
The quotient profile of any setfunction $\varphi$ is quotient-closed and has the common lift property. 
\end{lem}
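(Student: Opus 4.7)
My plan is to unpack the definition directly: $\psi$ is a quotient of $\varphi$ exactly when $\psi = \varphi \circ F^{-1}$ for some measurable map $F\colon J\to[k]$. Both claims then reduce to simple compositional manipulations with such maps, so I do not expect a serious obstacle; the only points to watch are keeping the direction of composition straight and confirming that the maps I build are still measurable.

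For quotient-closedness, I would start with an arbitrary $\psi\in Q_k(\varphi)$, so $\psi=\varphi\circ F^{-1}$ for some measurable $F\colon J\to[k]$, and with a further quotient $\psi'\in Q_m(\psi)$, so $\psi'=\psi\circ G^{-1}$ for some $G\colon[k]\to[m]$. Chasing definitions gives $\psi'(X)=\varphi(F^{-1}(G^{-1}(X)))=\varphi((G\circ F)^{-1}(X))$ for every $X\subseteq[m]$. Since $[k]$ and $[m]$ carry the discrete $\sigma$-algebra, every map between them is measurable, and the composition $G\circ F\colon J\to[m]$ is therefore measurable; hence $\psi'\in Q_m(\varphi)\subseteq Q(\varphi)$.

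For the common lift property, I would take finitely many $\psi_1,\dots,\psi_n\in Q(\varphi)$, writing $\psi_i=\varphi\circ F_i^{-1}$ for measurable maps $F_i\colon J\to[k_i]$, and form the product map $F=(F_1,\dots,F_n)\colon J\to[k_1]\times\cdots\times[k_n]$. Identifying the target with $[N]$ for $N=\prod_i k_i$, this $F$ is measurable (each coordinate is), so $\psi:=\varphi\circ F^{-1}\in Q_N(\varphi)$. Letting $\pi_i$ denote the $i$-th coordinate projection, $F_i=\pi_i\circ F$ yields $\psi_i=\varphi\circ F_i^{-1}=\psi\circ\pi_i^{-1}$, showing that $\psi$ is a common lift of the $\psi_i$ inside the quotient profile.

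The main obstacle is essentially nonexistent: the lemma is almost tautological once quotients are viewed as pullbacks by measurable maps into finite sets. The slight subtlety is that the common lift must lie in the quotient profile itself, not merely in some abstract lift space — but the product-map construction guarantees exactly this, since it produces a single measurable map from $J$ into a finite set.
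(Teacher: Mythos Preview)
Your proof is correct and follows essentially the same route as the paper: the paper also observes that a quotient of a quotient is a quotient (your composition $G\circ F$), and for the common lift it takes the common refinement of the defining partitions, which is exactly your product map $(F_1,\dots,F_n)$ into $[k_1]\times\cdots\times[k_n]\cong[\prod_i k_i]$. The paper even records, right after the lemma, the same quantitative conclusion that the common lift lands in $Q_{k_1\cdots k_n}(\varphi)$.
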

\begin{proof}
Trivially, every quotient of a quotient of $\fg$ is a quotient of $\fg$. Given a finite set of quotients, every quotient is defined by a corresponding partition, and the quotient defined by the common refinement of these partitions is a common lift.
\end{proof}

The proof shows that if $\varphi_i\in Q_{k_i}(\varphi)$, then $\varphi_1,\dots,\varphi_m$ have a common lift in $Q_{k_1\cdots k_m}(\fg)$.

\subsection{Convergence}

Let $\varphi_n$ be a setfunction on a set-algebra $(J_n,\cB_n)$ for $n\in \N$. We say that the sequence $(\fg_n\colon n\in\N)$ is {\it quotient-convergent} if for every $k\in\N$, the quotient sets $(Q_k(\fg_n)\colon n\in\N)$ form a Cauchy sequence in the Hausdorff distance $\dH{k}$ on the $2^k$-dimensional Euclidean space. Let $\hto$ denote convergence in this distance. We say that $(\fg_n\colon n\in\N)$ {\it quotient-converges to} $\fg$ for some setfunction $\fg$ if the sequence $(Q_k(\fg_n)\colon n\in\N)$ converges to $Q_k(\fg)$ in the Hausdorff distance, and denote this by $\fg_n \rightarrowtail \fg$. We can combine these Hausdorff distances $\dH{k}$ into a single pseudometric as follows. For two setfunctions $\fg_1$ and $\fg_2$, define
\begin{align}\label{EQ:HAUS-DIST}
d(\fg_1,\fg_2) = \sum_{k=1}^\infty 2^{-k} \dH{k}(Q_k(\fg_1),Q_k(\fg_2)).
\end{align} 
Then quotient-convergence of a sequence $\fg_n$ means that it is a Cauchy sequence, and $\fg_n \rightarrowtail \fg$ means convergence in this pseudometric. It is easy to see that a quotient-convergent sequence of setfunctions is uniformly bounded. 

An important example of quotient-convergent sequences of setfunctions is the following. A {\it tower} is a sequence $(\fg_n\colon n\in\Nbb)$ of setfunctions on finite set-algebras $(J_n,2^{J_n})$, such that $\fg_n$ is a quotient of $\fg_{n+1}$ for every $n$. We assume for convenience that $J_n=[v_n]$ for some $v_n\in\Nbb$, and that $\fg_n(\emptyset)=0$\footnote{We could also allow $J_n$ to be a Polish space, and the setfunction $\varphi_n$ to be defined on the Borel subsets. Then, with a few additional assumptions, many results in the following section could be extended, but the treatment would become too technical and not to the point of this paper.}. Trivially, $\fg_n(J_n)=\fg_1(J_1)$ for every $n\ge 1$.

\begin{lem}\label{lem:tower-conv}
Every tower is quotient-convergent.
\end{lem}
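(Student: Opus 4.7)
The central observation is that the tower structure forces the $k$-quotient sets to be monotone in $n$. Indeed, by Lemma~\ref{lem:quotient-prof-lift} (more precisely, the transitivity of "being a quotient" used in its proof), if $\psi$ is a quotient of $\varphi_n$ and $\varphi_n$ is a quotient of $\varphi_{n+1}$, then $\psi$ is a quotient of $\varphi_{n+1}$. Hence
\[
Q_k(\varphi_n) \subseteq Q_k(\varphi_{n+1}) \qquad \text{for every } k,n \in \N,
\]
so $(Q_k(\varphi_n))_{n \in \N}$ is an increasing sequence of finite subsets of $\R^{2^k}$. This is the one structural input; everything else is a soft compactness argument.

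Fix $k$ and let $K_k := \overline{\bigcup_n Q_k(\varphi_n)} \subseteq \R^{2^k}$. Assuming the union is bounded (see below), $K_k$ is compact, and I claim $Q_k(\varphi_n) \hto K_k$, which implies the Cauchy property. For any $\eps>0$, compactness yields a finite $\eps$-net $\{p_1,\dots,p_m\} \subseteq K_k$. By definition of the closure, for each $i$ there exists $q_i \in Q_k(\varphi_{n_i})$ with $\|p_i - q_i\| < \eps$. Setting $N = \max_i n_i$, monotonicity gives $\{q_1,\dots,q_m\} \subseteq Q_k(\varphi_n)$ for all $n \geq N$. Every point of $K_k$ is then within $2\eps$ of some $q_i$, hence within $2\eps$ of $Q_k(\varphi_n)$; since $Q_k(\varphi_n) \subseteq K_k$ the reverse inclusion is automatic, so $\dH{k}(K_k,Q_k(\varphi_n)) \leq 2\eps$. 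This proves $(Q_k(\varphi_n))$ is Cauchy in $\dH{k}$, and since $k$ was arbitrary, the tower is quotient-convergent.

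The one genuine obstacle is boundedness of $\bigcup_n Q_k(\varphi_n)$ in $\R^{2^k}$. Without some control, a coordinate $\varphi_n(F^{-1}(i))$ could blow up with $n$, making Hausdorff distances infinite and the Cauchy property empty. This is not excluded by the tower axioms in isolation, but it is forced by the setting: the remark preceding the lemma notes that any quotient-convergent sequence is uniformly bounded, so for the statement to be nontrivial one works with bounded towers (which is also the only case used in the sequel, e.g.\ for matroid rank functions where $\varphi_n \leq \rk(J_n) = \rk(J_1)$ is automatic from monotonicity). Under this mild boundedness hypothesis the argument above goes through verbatim, and in fact identifies the limiting $k$-quotient set with the compact set $K_k$.
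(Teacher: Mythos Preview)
Your proof is correct and follows essentially the same approach as the paper's: both observe that $Q_k(\varphi_n)\subseteq Q_k(\varphi_{n+1})$ from the transitivity of quotients, and then conclude that an increasing (uniformly bounded) sequence of sets is Hausdorff-convergent. The paper dispatches boundedness with a one-line reduction ``we may assume $0\le\varphi_n\le 1$'' and calls the convergence ``trivial,'' whereas you spell out the $\eps$-net argument and flag the boundedness hypothesis explicitly; your treatment is, if anything, more careful on that point.
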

\begin{proof}
Let $(\fg_n\colon n\in\Nbb)$ be a tower. We may assume that $0\le\fg_n\le 1$. By Lemma~\ref{lem:quotient-prof-lift}, we have
\begin{equation}\label{EQ:TOWER1}
\overline{Q}_k(\fg_n) \subseteq \overline{Q}_k(\overline{Q}_{v_n}(\fg_{n+1})) = \overline{Q}_k(\fg_{n+1}).
\end{equation}
So for every $k$, the sequence $(\overline{Q}_k(\fg_n)\colon n\in\Nbb)$ is increasing, which implies trivially that it is convergent in the Hausdorff distance.
\end{proof}

The following lemma is a certain converse to Lemma~\ref{lem:quotient-prof-lift}.

\begin{lem}\label{lem:lift-quotient-prof}
Let $S_k$ be a nonempty set of setfunctions on $[k]$ for $k\in\N$, and assume that $S=\cup_k S_k$ is quotient-closed and has the common lift property. Then there are setfunctions $\fg_k\in S_k$ such that $(\fg_n\colon n\in\N)$ is a tower, and $\cup_n Q_k(\fg_n)$ is a dense subset of $S_k$.
\end{lem}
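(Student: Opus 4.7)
The plan is to build the tower in two layers: first a \emph{backbone} sequence $(\tilde\fg_t)$ in $S$ built by iterated common lifts that captures a countable dense subset of each $S_k$, and second a reindexing via \emph{inflation} by injections $[M_t]\hookrightarrow[n]$ to make the backbone fit the required indexing $\fg_n\in S_n$. The main obstacle is that the common lift property gives no control over the ground-set sizes $M_t$ of the lifts produced, whereas the tower demands $\fg_n$ on exactly $[n]$; inflation (which is a quotient in the sense of the paper) remains inside $S$ by quotient-closedness, and this is what lets us reconcile the two scales.

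Concretely, since $S_k\subseteq\R^{2^k}$ is separable, fix a countable dense subset $D_k\subseteq S_k$ and enumerate $D=\bigcup_k D_k$ as $(\psi_t\colon t\ge 1)$. Choose $\tilde\fg_0\in S_1$ (nonempty by hypothesis), and for $t\ge 1$ apply the common lift property to $\{\tilde\fg_{t-1},\psi_t\}$ to obtain $\tilde\fg_t\in S_{M_t}$ that lifts both. Set $P_0=1$ and $P_t=\max(M_t,P_{t-1}+1)$ for $t\ge 1$; then $P_t\ge M_t$ and $P_0<P_1<P_2<\cdots$. For each $n\in[P_t,P_{t+1}-1]$, define
\[
\fg_n \;=\; \tilde\fg_t\circ F_n^{-1}, \qquad F_n\colon[M_t]\hookrightarrow[n]\ \text{the inclusion,}
\]
so $\fg_n(X)=\tilde\fg_t(X\cap[M_t])$. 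Quotient-closedness of $S$ places $\fg_n$ in $S_n$.

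To verify the tower condition, consider two cases. If $n$ and $n+1$ lie in the same stage, the map $G\colon[n+1]\to[n]$ that is the identity on $[n]$ with $G(n+1)=1$ gives $\fg_{n+1}\circ G^{-1}(X)=\tilde\fg_t(X\cap[M_t])=\fg_n(X)$. At a stage boundary $n=P_{t+1}-1$, let $H\colon[M_{t+1}]\to[M_t]$ realize $\tilde\fg_t=\tilde\fg_{t+1}\circ H^{-1}$, and define $G(j)=H(j)$ for $j\le M_{t+1}$ and $G(j)=1$ otherwise; this is a well-defined map $[n+1]\to[n]$ (since $H(j)\in[M_t]\subseteq[n]$ and $M_{t+1}\le P_{t+1}=n+1$), and a short computation using $\tilde\fg_t=\tilde\fg_{t+1}\circ H^{-1}$ yields $\fg_{n+1}\circ G^{-1}=\fg_n$. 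Hence $(\fg_n)$ is a tower. Finally, because $F_n$ is injective, every map $[M_t]\to[k]$ can be written as $G\circ F_n$ for some $G\colon[n]\to[k]$, so $Q_k(\fg_n)=Q_k(\tilde\fg_t)$; in particular $\psi_t\in Q_{k_t}(\tilde\fg_t)=Q_{k_t}(\fg_{P_t})$, placing $D_k$ inside $\bigcup_n Q_k(\fg_n)$ and making the latter dense in $S_k$.
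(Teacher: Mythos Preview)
Your proof is correct and follows essentially the same strategy as the paper: build a backbone sequence in $S$ by iterated common lifts so that its quotient profile eventually contains a dense subset of every $S_k$, then interpolate to produce exactly one function on $[n]$ for each $n$. The only cosmetic differences are that the paper uses finite $\eps$-nets rather than your countable dense set, and it fills the index gaps by successively \emph{merging} pairs of elements (quotienting the upper backbone function down) instead of your \emph{inflation} of the lower backbone function by dummy elements; your inflation argument is equally valid and your density step via countable dense subsets is in fact slightly cleaner, since it does not tacitly assume that the $S_k$ are totally bounded.
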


\begin{proof}
    For every $k,n\in\N$, let $A_k^{n}$ be a finite $(1/k)$-net in $S_k$, and set $B_n=\cup_{k=1}^n A_k^{n}$. Thus, $B_n$ is a finite, fine net in the first $n$ sets $S_k$. Let $\psi_0\in S_0$, and let $\psi_n\in S$ be a common lift of $\psi_{n-1}$ and all setfunctions in $B_n$. Then $(\psi_1,\psi_2,\dots)$ is a tower, and since $S$ is quotient-closed, $\cup_n Q_k(\fg_n)\subseteq S_k$. Also, $A_{k,n}\in\cup_n Q_k(\fg_n)$, which implies that $\cup_n Q_k(\fg_n)$ is dense in $S_k$.
    
    The sequence $(\psi_n\colon n\in\N)$ does not contain an element from every $S_k$, but it is easy to ``pad'' it to a sequence $(\fg_n\colon n\in \N)$ as in the lemma. In fact, if $\psi_n\in S_{k_n}$, then we can define $\fg_{k_n}=\psi_n$, and define $\fg_k$ for $k=k_{n+1}-1, k_{n+1}-2, \dots, k_n+1$ recursively by merging two elements of $[k+1]$ that are mapped onto one in the quotient map $\psi_{n+1}\to\psi_n$.
\end{proof}

We continue with more examples.

\begin{ex}[Uniform and sparse paving matroids]\label{EXA:PAVING} 
For $n\in\N$ and $0\le r\le n$, we define a {\it uniform matroid} $U_{n,r}$ on $[n]$ by its rank function $\rk(X)=\min\{|X|,r\}$. Let us normalize to obtain the submodular function $\rho_{n,r}(X)=\rk(X)/n=\min\{|X|/n,r/n\}$. Then $\rho_{n,r}$ is the truncation of the uniform probability measure $\pi_n$ on $[n]$ by $r/n$. Hence for every partition $\cP$ of $[n]$, the submodular setfunction $\rho_{n,r}/\cP$ is the truncation of $\pi_n/\cP$ by $r/n$. In particular, for any $k\in\N$, $Q_k(\rho_{n,r})$ can be obtained from $Q_k(\pi_n)$ by truncating all its elements by $r/n$. Let $1\le r_n<n$ for $n\in\N$ where $r_n\sim c\cdot n$ for some fixed constant $0<c<1$. Let $\lambda$ be the Lebesgue measure on $[0,1]$, and let $\lambda_c=\min\{\lambda,c\}$. Then it is not difficult to check that for every $k\in\N$, $Q_k(\rho_{n,r_n})\to Q_k(\lambda_c)$ in Hausdorff distance, and hence $\rho_{n,r_n}\rightarrowtail \lambda_c$.

A much larger class of matroids can be obtained by the following slight modification of uniform matroids. Let $\cH\subseteq \binom{[n]}{r}$ where $n>r$, and assume that $|A\cap B|\le r-2$ for any two sets $A,B\in\cH$. Define
\[
\rk_\cH(X)=
  \begin{cases}
    |X| & \text{if $|X|\le r-1$}, \\
    r-1 & \text{if $X\in\cH$}, \\
    r & \text{otherwise}.
  \end{cases}
\]
Then $M_\cH=([n],\rk)$ is a matroid of rank $r$, called a {\it sparse paving matroid} defined by $\cH$. Specifically, choosing $\cH=\emptyset$ gives the uniform matroid $U_{n,r}$. Let us normalize to obtain the submodular function
$\rho_\cH(X)=\rk_\cH(X)/n$. Then
\begin{equation*}\label{EQ:PAVE-UNIFORM}
\rho_{n,r}(X)-\frac1n\le\rho_\cH(X)\le \rho_{n,r}(X).
\end{equation*}
This implies that if $r_n\sim c\cdot n$ and $\cH_n\subseteq \binom{[n]}{r_n}$ as above, then $\dH{k}\big(Q_k(\rho_{\cH_n}),Q_k(\rho_{n,r_n})\big) \to 0$,
and hence $\rho_{\cH_n}\rightarrowtail \lambda_c$.
\end{ex}

\begin{ex}[Dense convergence and graphons]\label{EXA:GRAPHON-CONV}
Let $G_n=(V_n,E_n)$ $(n\in\Nbb)$ be a sequence of graphs converging to a graphon $W\colon [0,1]^2\to [0,1]$. For $X\subseteq V(G_n)$, let $e_n(X)$ denote the number of edges in $G_n$ connecting $X$ to $V(G_n)\setminus X$. Then $\rho_n(X) = e_n(X)/|V_n|^2$ is a finite submodular setfunction with $0\le \rho_n\le 1$. Let us define 
\[
\rho(X)=\int_{X\times ([0,1]^2\setminus X)} W(x,y)\,dx\,dy.
\]
Then $\rho$ is a submodular setfunction on the Borel sets in $[0,1]$, and $\rho_n \rightarrowtail \rho$. The proof of this fact is somewhat involved, and it uses the characterization of the convergence $G_n\to W$ in terms of the $q$-quotients $\cS_q(W)$ (similar to our definition of quotient-convergence) in~\cite{BCLSV2,lovasz2012large}.
\end{ex}

\begin{ex}[Sparse convergence and graphings]\label{EXA:GRAPHING-CONV}
It is shown in~\cite{bblt2024graphing} that if a sequence of bounded graphs $(G_n\colon n\in\Nbb)$ converges to a graphing $\Gb$ in the local-global sense, then their normalized matroid rank functions quotient-converge to the rank function of $\Gb$, defined in~\cite{lovasz2023matroid}. It is also shown that local convergence is not enough; we refer to~\cite{lovasz2012large} and~\cite{bblt2024graphing} for the definitions.
\end{ex}

\section{Construction of the limit object}
\label{sec:complete}

This section is dedicated to the proof that the space of submodular setfunctions (endowed with the pseudometric~\ref{EQ:HAUS-DIST}) is complete. Formally, we prove the following.

\begin{thm}\label{thm:complete}
    Let $(\varphi_n\colon n\in N)$ be a quotient-convergent sequence of increasing submodular setfunctions over any set-algebras. Then there exists an increasing submodular setfunction $\varphi\colon\cB(C)\to\R$ defined on the Borel sets of the Cantor set such that $\varphi_n\rightarrowtail\varphi$. 
\end{thm}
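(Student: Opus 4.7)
The plan is to pass to set-wise limits of the quotient sets, use Lemma~\ref{lem:lift-quotient-prof} to extract a tower realizing those limits, encode the tower as refining clopen partitions of the Cantor set $C$, and extend the resulting setfunction from the clopen algebra to $\cB(C)$ by inner approximation.

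First I would establish the limit structure. By hypothesis, for each $k$ the sequence $(\overline{Q}_k(\varphi_n))_n$ is Cauchy in $\dH{k}$, hence converges to a compact set $S_k\subseteq\R^{2^k}$. Because a point of $S_k$ is a pointwise limit of increasing submodular setfunctions on $[k]$, every element of $S_k$ is itself increasing and submodular. Quotient-taking along a fixed partition $[k]\to[j]$ is a Lipschitz linear map $\R^{2^k}\to\R^{2^j}$ and hence continuous in Hausdorff distance, so $S:=\bigcup_k S_k$ is quotient-closed; the common lift property for $S$ follows from the same property for the $Q(\varphi_n)$ via a diagonal/common-refinement argument. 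Lemma~\ref{lem:lift-quotient-prof} then yields a tower $(\psi_n\colon n\in\N)$ of increasing submodular setfunctions $\psi_n$ with $\bigcup_n Q_k(\psi_n)$ dense in $S_k$ for every $k$.

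Next I would realize the tower inside $C$. Zero-dimensionality and perfectness of $C$ let me choose a refining sequence of clopen partitions $\cP_n=\{C_1^{(n)},\dots,C_{v_n}^{(n)}\}$ of $C$ whose mergings from level $n+1$ to level $n$ match the tower's quotient maps, and whose union generates $\cB(C)$. Let $\cA=\bigcup_n\sigma(\cP_n)$ be the resulting clopen algebra, and define $\varphi(A)=\psi_n(I)$ whenever $A\in\sigma(\cP_n)$ is the union of the parts indexed by $I\subseteq[v_n]$; consistency across $n$ follows from the tower property. Since being increasing and submodular is a finitary property (Example~\ref{EXA:FIN-CONV}), $\varphi$ is increasing and submodular on $\cA$. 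Extend $\varphi$ to $\cB(C)$ by inner clopen approximation,
\[
\varphi(B)=\sup\bigl\{\varphi(A)\colon A\in\cA,\ A\subseteq B\bigr\},\qquad B\in\cB(C).
\]
The extension is bounded, agrees with $\varphi$ on $\cA$, is increasing, and is lower continuous by construction; submodularity follows by picking near-optimal inner approximations $A\subseteq X$ and $A'\subseteq Y$ of Borel $X,Y$, applying submodularity on $\cA$ to $A\cup A'$ and $A\cap A'$, and passing to the limit using monotonicity of the extension.

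Finally I would verify $\varphi_n\rto\varphi$, i.e., $\overline{Q_k(\varphi)}=S_k$ for every $k$. Each $\psi_n$ is by construction the quotient $\varphi/\cP_n$, so $\bigcup_n Q_k(\psi_n)\subseteq Q_k(\varphi)$; combined with density this gives $S_k\subseteq\overline{Q_k(\varphi)}$. For the reverse inclusion, fix a Borel partition $F\colon C\to[k]$ and $\varepsilon>0$; pick clopen $A_i\subseteq F^{-1}(i)$ with $\varphi(F^{-1}(i))-\varphi(A_i)<\varepsilon$, absorb the leftover $C\setminus\bigcup_i A_i$ into $A_1$, and show via monotonicity and submodularity that the resulting clopen-partition quotient differs from $\varphi\circ F^{-1}$ by $O(\varepsilon)$ at every $I\subseteq[k]$. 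The main obstacle is precisely this last point: upgrading inner approximation of individual Borel sets to a simultaneous approximation of an entire Borel $k$-partition, so as to control $\varphi$ on all $2^k$ Boolean combinations at once. Here lower continuity of $\varphi$ and submodularity must be combined carefully, and the asymmetry between inner and outer approximation explains why the constructed limit is lower continuous, mirroring the non-uniqueness of the limit object noted in the introduction.
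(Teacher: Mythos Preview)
Your overall architecture matches the paper's (pass to Hausdorff limits $S_k$, invoke Lemma~\ref{lem:lift-quotient-prof} to get a tower, realize it on the clopen algebra $\cH$ of $C$, then extend to $\cB(C)$), but the extension step is wrong. The inner clopen approximation
\[
\varphi(B)=\sup\{\varphi(A)\colon A\in\cA,\ A\subseteq B\}
\]
does \emph{not} preserve submodularity. Your sketch picks near-optimal $A\subseteq X$, $A'\subseteq Y$ and uses $\varphi(A)+\varphi(A')\ge\varphi(A\cup A')+\varphi(A\cap A')$; but monotonicity of the extension then only gives $\varphi(A\cup A')\le\varphi(X\cup Y)$ and $\varphi(A\cap A')\le\varphi(X\cap Y)$, which points the wrong way: you cannot bound $\varphi(X\cup Y)+\varphi(X\cap Y)$ from above this way. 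Concretely, take $\psih(A)=\sqrt{\mu(A)}$ on clopen $A\subseteq\{0,1\}^{\N}$ (increasing and submodular), and let $X$ be the tail event ``infinitely many $1$'s''. Then $\mu(X)=1$ but $X$ contains no nonempty clopen set (every cylinder $[w]$ contains $w000\ldots\notin X$), so your extension gives $\varphi(X)=0$; likewise $\varphi(X^c)=0$, yet $\varphi(C)=1$, violating submodularity.

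This is why the paper does not extend directly from $\cH$ to $\cB$. It first extends from $\cH$ to compact sets by \emph{outer} clopen approximation, $\psik(K)=\inf\{\psih(H)\colon K\subseteq H\in\cH\}$; here the inequalities do go the right way and yield an alternating capacity of order~$2$ (Lemma~\ref{lem:k}). Choquet's theorem (Theorem~\ref{thm:choquet}) then produces $\psib$ on $\cB(C)$ with matching inner and outer capacities. That two-sided approximation ($K_i\subseteq B_i\subseteq U_i$) is also exactly what drives the partition-approximation step you flag as ``the main obstacle'': Lemmas~\ref{lem:clopen_approx} and~\ref{lem:clopen_approx2} sandwich each Borel block between a compact and an open set and then separate them by a clopen partition, controlling all $2^k$ unions via Lemma~\ref{LEM:CHOQ}. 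Your one-sided inner approximation gives no control on the leftover $C\setminus\bigcup_i A_i$ (in the example above it is all of $C$), so that step would fail as well.
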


The proof will consist of two parts. In Section~\ref{sec:clopen}, we describe how to construct a submodular setfunction on the algebra of clopen subsets of the Cantor set. Then the function is extended to all Borel subsets using a classical result of Choquet (Section~\ref{sec:borel}). Theorem~\ref{thm:complete} will follow easily by combining these results.

\subsection{Limit object on the algebra of clopen subsets}
\label{sec:clopen}

In this section, we construct a submodular setfunction on the clopen subsets of the Cantor set, which is a limit object for a quotient-convergent sequence of submodular setfunctions.

\begin{lem}\label{LEM:TOWER-LIMIT}
For every tower $(\fg_n\colon n\in\N)$ of finite setfunctions, there is a setfunction $\fg$ on the algebra $\cH$ of clopen sets of the Cantor set and a refining sequence of partitions $(\PP_n\colon n\in\N)$ of $(C,\cH)$, such that $\fg_n=\fg/\PP_n$. Furthermore, $\fg_n\rto\fg$.
\end{lem}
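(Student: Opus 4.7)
The plan is to build $\fg$ as the natural extension of the tower along an inverse system, with the inverse limit playing the role of the ground space; the Cantor set then houses the limit via a closed embedding. Write the tower as maps $F_n\colon J_{n+1}\to J_n$ with $\fg_n=\fg_{n+1}\circ F_n^{-1}$. We may assume each $F_n$ is surjective by deleting the elements of $J_n$ whose fibres upstream are eventually empty, since those elements are forced to carry $\fg_n$-value zero. Let $X:=\varprojlim(J_n,F_n)$ carry the product topology with each $J_n$ discrete. Then $X$ is a compact metrizable totally disconnected space, the projections $\pi_n\colon X\to J_n$ are continuous surjections, the cylinders $C^n_x:=\pi_n^{-1}(x)$ form a basis of the clopen algebra $\cH_X$, and the partitions $\PP_n^X:=\{C^n_x:x\in J_n\}$ refine each other via $C^n_x=\bigsqcup_{y\in F_n^{-1}(x)}C^{n+1}_y$.

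Next, I define $\fg_X$ on $\cH_X$. By compactness together with the basis property, every clopen $A\subseteq X$ can be written as $A=\bigsqcup_{x\in S}C^n_x$ for some level $n$ and some $S\subseteq J_n$; set $\fg_X(A):=\fg_n(S)$. Iterating the tower relation shows independence of the level: if the same $A$ is written at a finer level $m\ge n$ with index set $T\subseteq J_m$, then $T=F_{n,m}^{-1}(S)$ for $F_{n,m}:=F_n\circ\cdots\circ F_{m-1}$, whence $\fg_m(T)=\fg_n(S)$. Monotonicity and submodularity of $\fg_X$ follow by passing to a common refined level, where set operations on clopens reduce to set operations on the index sets in $J_n$. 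By construction, $\fg_n=\fg_X/\PP_n^X$.

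The convergence $\fg_n\rto\fg_X$ rests on the identity $Q_k(\fg_X)=\bigcup_n Q_k(\fg_n)$. The inclusion $\supseteq$ is immediate from $\fg_n=\fg_X/\PP_n^X$ together with Lemma~\ref{lem:quotient-prof-lift}. For $\subseteq$, a $k$-quotient of $\fg_X$ is given by a clopen partition of $X$, equivalently a continuous map $f\colon X\to[k]$; by compactness of $X$ and the cylinder basis each level set of $f$ is a finite union of cylinders at some common level $n$, so $f$ factors through $\pi_n$, and the quotient lies in $Q_k(\fg_n)$. Since $Q_k(\fg_n)\subseteq Q_k(\fg_{n+1})$ is an increasing sequence lying inside a fixed compact box of $\R^{2^k}$ with union $Q_k(\fg_X)$, a routine covering argument in that box gives $\dH{k}(Q_k(\fg_n),Q_k(\fg_X))\to 0$.

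Finally, to land on the Cantor set $C$, embed $X$ as a closed subspace of $C$ (possible because $X$ is a compact, zero-dimensional metric space) and define $\fg(A):=\fg_X(A\cap X)$ for $A\in\cH_C$. Since $A\mapsto A\cap X$ commutes with $\cup$ and $\cap$, monotonicity and submodularity transfer. The partitions $\PP_n$ of $C$ are obtained inductively by lifting each $C^n_x$ to a clopen of $C$, making the lifts pairwise disjoint, and sweeping the leftover $C\setminus X$ into a single designated part at every level; this does not alter $\fg$-values and preserves the refining property. Restricting a clopen partition of $C$ to $X$ gives a clopen partition of $X$ with the same quotient function, and every clopen partition of $X$ extends back to $C$, so $Q_k(\fg)=Q_k(\fg_X)$ and hence $\fg_n\rto\fg$. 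The main obstacle I foresee is the bookkeeping in this last embedding step and the ``$f$ factors through $\pi_n$'' argument in the convergence analysis; the inverse limit construction itself is entirely standard.
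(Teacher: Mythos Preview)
Your proposal is correct and follows essentially the same route as the paper: build the inverse limit of the tower, define $\fg$ on its clopen algebra via the projections, verify $Q_k(\fg)=\bigcup_n Q_k(\fg_n)$ by factoring any clopen $k$-partition through some $\pi_n$, and then pass to the Cantor set via a closed embedding and the rule $\fg(A)=\fg_X(A\cap X)$. You are slightly more explicit than the paper in lifting the partitions from $X$ to $C$ and in reducing to surjective bonding maps; note, however, that the lemma as stated does not assume monotonicity or submodularity of the $\fg_n$, so those remarks (while true under those extra hypotheses) are superfluous here.
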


\begin{proof}
Let $\fg_n$ be defined on the finite Boolean algebra $(J_n,2^{J_n})$. By the definition of quotients and of towers, there are maps $f_n\colon J_{n+1}\to J_n$ such that $\fg_n=\fg_{n+1}\circ f_n^{-1}$.

Consider the inverse limit of the system of setfunctions $\fg_n$ and maps $f_n$. Explicitly, we consider the set $X$ of all sequences $(x_n\colon n\in\N)$ such that $x_n\in J_n$ and $x_n=f_n(x_{n+1})$. We define the map $\Theta_n\colon J\to J_n$ as the projection onto the $n^{\text{th}}$ coordinate:
$\Theta_n(x_1,x_2,\dots) = x_n$. Then $\Theta_n=f_n\circ\Theta_{n+1}$. This set can be endowed with a metric, in which the distance of $(x_1,x_2,\dots)$ and $(y_1,y_2,\dots)$ is $2^{-r}$, where $r$ is the length of their longest common prefix. This topology of $X$ is totally disconnected, metrizable and compact.

For every $n$, the sets $\{\Theta_n^{-1}(u)\mid u\in J_n\}$ form a finite partition $\PP_n$ of $J$, and they generate a (finite) set-algebra $\cA_n$. Clearly, $(\PP_n\colon n\in\N)$ is a refining sequence of partitions into sets generating $\cH$, $\cA_n\subseteq\cA_{n+1}$, and $\cH=\cup_n\cH_n$ is the set-algebra $\cH$ of clopen subsets of $X$. We define $\fg(X)=\fg_n(\Theta_n(X))$ for $X\in\cA_n$. It is easy to see that this value is independent of $n$, as soon as $X\in\cA_n$. Trivially, $\fg$ is a setfunction on $\cH$, and $\fg_n=\fg\circ\Theta_n^{-1}$. So $\fg_n$ is a quotient of $\fg$. In terms of the partitions, this means that $\fg/\PP_n$ is isomorphic to $\fg_n$.

To prove the last assertion, notice that $Q_k(\fg_n)\subseteq Q_k(\fg)$. It suffices to show that $\cup_k Q_k(\fg_n)=Q_k(\fg)$. Indeed, if $\alpha\in Q_k(\fg)$, then $\alpha=\fg/\PP$ for a finite $k$-partition into sets in $\cH$. These partition classes belong to $\cA_n$ for a sufficiently large $n$. This implies that the sets $\{\Theta_n(U)\mid U\in\cP\}$ are disjoint, and form a partition $\PP'$ of $[n]$. Then $\fg/\cP =\fg_n/\cP'$, showing that $\alpha\in Q_k(\fg_n)$.

Right now, we have constructed a limit object on some totally disconnected compact metric space $X$. To obtain a representation on the Cantor set, we use the well-known fact that $X$ can be embedded homomorphically as a closed subset $Y$ of the Cantor set $C$. Defining $\fg'(U)=\fg(Y\cap U)$, we get a representation on the clopen subsets of the Cantor set. 
\end{proof}

\begin{cor}\label{cor:sofic_weak}
For every quotient-convergent sequence $(\fg_n\colon n\in\N)$ of finite setfunctions there is a setfunction $\fg$ on the algebra $\cH$ of clopen sets of the Cantor set such that $\fg_n\rto\fg$.
\end{cor}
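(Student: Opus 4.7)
The plan is to reduce the corollary to the tower case handled by Lemma~\ref{LEM:TOWER-LIMIT}. Since $(\fg_n\colon n\in\N)$ is quotient-convergent it is uniformly bounded, so for each fixed $k$ the closed sets $\overline{Q_k(\fg_n)}$ all lie in a common compact subset of $\R^{2^k}$ and form a Cauchy sequence in the Hausdorff metric; by completeness of Hausdorff distance on closed subsets of a compact space, they converge to a nonempty closed set $S_k$. The first task is to transfer the structural properties from Lemma~\ref{lem:quotient-prof-lift} to the union $S:=\cup_k S_k$.

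Quotient-closedness of $S$ is immediate because for any map $F\colon[k]\to[\ell]$ the quotient operation $\alpha\mapsto\alpha\circ F^{-1}$ is coordinate-wise continuous: if $\alpha_n\in Q_k(\fg_n)$ converges to $\alpha\in S_k$, then $\alpha_n\circ F^{-1}\in Q_\ell(\fg_n)$ converges to $\alpha\circ F^{-1}$, which therefore lies in $S_\ell$. The common lift property is the main step. Given $\alpha_i\in S_{k_i}$ for $i=1,\dots,m$, use Hausdorff convergence to choose, for each sufficiently large $n$, elements $\alpha_{i,n}\in Q_{k_i}(\fg_n)$ with $\alpha_{i,n}\to\alpha_i$ as $n\to\infty$. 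By the remark following Lemma~\ref{lem:quotient-prof-lift}, each finite family $\alpha_{1,n},\dots,\alpha_{m,n}$ has a common lift $\beta_n\in Q_{k_1\cdots k_m}(\fg_n)$. Uniform boundedness keeps the $\beta_n$ in a compact region, so a subsequence converges to some $\beta\in S_{k_1\cdots k_m}$; by continuity of the quotient maps the identities $\beta_n/F_i=\alpha_{i,n}$ pass to the limit as $\beta/F_i=\alpha_i$, exhibiting $\beta$ as the desired common lift.

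With $S$ now quotient-closed and possessing the common lift property, Lemma~\ref{lem:lift-quotient-prof} produces a tower $(\psi_n\colon n\in\N)$ such that $\cup_n Q_k(\psi_n)$ is dense in $S_k$ for every $k$. Lemma~\ref{LEM:TOWER-LIMIT} applied to this tower yields a setfunction $\fg$ on the algebra of clopen subsets of the Cantor set with $\psi_n\rto\fg$; moreover, its proof shows $Q_k(\fg)=\cup_n Q_k(\psi_n)$, so $\overline{Q_k(\fg)}=S_k$. Combined with $\overline{Q_k(\fg_n)}\to S_k$ by the definition of $S_k$, this yields $\fg_n\rto\fg$.

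The main obstacle is the common lift step: the finite-level lifts $\beta_n$ exist thanks to Lemma~\ref{lem:quotient-prof-lift}, but turning them into an honest common lift in the limit requires combining compactness (from uniform boundedness) with continuity of the quotient maps so that the accumulation point $\beta$ lifts the prescribed $\alpha_i$ rather than only nearby approximations $\alpha_{i,n}$. Everything else is routine assembly of tools already proved.
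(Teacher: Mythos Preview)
Your proof is correct and follows essentially the same route as the paper: define the Hausdorff limits $S_k$, verify that $S=\cup_k S_k$ is quotient-closed and has the common lift property, invoke Lemma~\ref{lem:lift-quotient-prof} to extract a tower, and then apply Lemma~\ref{LEM:TOWER-LIMIT} to produce $\fg$. Your treatment of the transfer of the two structural properties to $S$ is in fact more explicit than the paper's (which merely asserts that they follow); the only small omission is that the quotient maps $F_i$ witnessing $\beta_n/F_i=\alpha_{i,n}$ may a priori depend on $n$, but since there are only finitely many maps $[k_1\cdots k_m]\to[k_i]$ a further subsequence makes them constant and your limiting argument then goes through verbatim.
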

\begin{proof}
As remarked, a quotient-convergent sequence is uniformly bounded, and so we may assume that $-1\le\fg_n\le 1$. By definition, the sets $Q_k(\fg_n)$ $(n\in\Nbb)$ form a Cauchy sequence in the Hausdorff distance for every $k$. Since the space of compact subsets of compact set is compact in this pseudometric, there are compact sets $S_k$ such that $\overline{Q}_k(\fg_n)\to S_k$. From the fact that every $Q=\cup_k Q_k(\fg_n)$ is quotient-closed, it follows that $S=\cup_kS_k$ is quotient closed. From the fact that $Q$ has the common lift property, along with the remark after Lemma~\ref{lem:quotient-prof-lift}, it follows that $S$ has the common lift property. Hence, by Lemma~\ref{lem:lift-quotient-prof}, there are setfunctions $\psi_k\in S_k$ such that $(\psi_n\colon n\in\N)$ is a tower, and $\cup_n \overline{Q}_k(\psi_n)=S_k$, which is equivalent to saying that $\overline{Q}_k(\psi_n)\to S_k$. By Lemma~\ref{LEM:TOWER-LIMIT}, there is a setfunction $\fg$ on the algebra $\cH$ such that $\psi_n\rto \fg$, or $\overline{Q}_k(\psi_n)\to\overline{Q}_k(\fg)$, which means that $\overline{Q}_k(\fg)=S_k$. Then $\overline{Q}_k(\fg_n)\to \overline{Q}_k(\fg)$, which means that $\fg_n\rto\fg$.
\end{proof}

Let us point out that a very similar argument gives that if the setfunctions $\fg_n$ are increasing and/or submodular, then so is their limit function on the clopen subsets of the Cantor set.

\subsection{Extension to all Borel sets}
\label{sec:borel}

In this section we extend the limit setfunction constructed in the previous section from the clopen subsets to all Borel subsets of the Canter set, so that the closed quotient profile is preserved. From now on, we need the condition that the setfunction is increasing and submodular, but these properties will also be preserved. We use two results of Choquet~\cite{choquet1954theory}; see also~\cite[Chapter 30]{kechris2012classical} for a more recent introduction. A setfunction on the compact subsets of a topological space is called an {\it alternating capacity of order $2$} if it is increasing, submodular, and continuous from the right. One of the main results in~\cite{choquet1954theory} is the following theorem. 

\begin{thm}[Choquet]\label{thm:choquet}
Let $\varphi_{\cK}$ be an alternating capacity of order $2$ on the compact subsets of a Polish space. For any subset $S$, let 
    $$\varphi_*(S)=\sup_{\substack{K\subseteq S\\K\text{ compact}}}\varphi_{\cK}(K)$$ 
be the inner capacity, and let 
    $$\varphi^*(S)=\inf_{\substack{S\subseteq U\\U\text{ open}}}\varphi_*(U)$$ be the outer capacity. 
Then the setfunction $\varphi^*$ is increasing and submodular. Furthermore, if $B$ is Borel (or analytic) set, then $\varphi_*(B)=\varphi^*(B)$. 
\end{thm}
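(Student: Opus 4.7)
The plan is to establish the two assertions separately: first that $\varphi^*$ is increasing and submodular on all subsets, and then the capacitability statement $\varphi_*(B)=\varphi^*(B)$ for Borel (or analytic) $B$. Monotonicity of $\varphi^*$ is immediate from the definition, so the real content lies in submodularity and in capacitability.

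For submodularity of $\varphi^*$, I would first establish it on open sets, where $\varphi^*(U)=\varphi_*(U)$. Given open $U,V$ and compacts $K\subseteq U\cap V$, $L\subseteq U\cup V$ approximating $\varphi_*(U\cap V)$ and $\varphi_*(U\cup V)$ within $\varepsilon$, I would use normality of the Polish space to split $L=L_U\cup L_V$ with $L_U\subseteq U$ and $L_V\subseteq V$ both compact. Submodularity of $\varphi_{\cK}$ applied to $(K\cup L_U,\,K\cup L_V)$, together with right-continuity to absorb the $\varepsilon$-errors, yields
\[
\varphi_*(U\cap V)+\varphi_*(U\cup V)\le \varphi_*(U)+\varphi_*(V).
\]
To pass to arbitrary $A,B$, choose open $U_A\supseteq A$, $U_B\supseteq B$ with $\varphi_*(U_A)\le\varphi^*(A)+\varepsilon$ and analogously for $B$; since $U_A\cap U_B$ and $U_A\cup U_B$ are open supersets of $A\cap B$ and $A\cup B$, submodularity on open sets gives
\[
\varphi^*(A\cap B)+\varphi^*(A\cup B)\le \varphi_*(U_A)+\varphi_*(U_B)\le \varphi^*(A)+\varphi^*(B)+2\varepsilon,
\]
and letting $\varepsilon\to 0$ concludes.

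For capacitability, I would follow Choquet's classical scheme. Since Borel sets are analytic, it suffices to treat an analytic $B$ presented as a Souslin scheme $B=\bigcup_{\sigma\in\N^\N}\bigcap_n F_{\sigma|_n}$ of closed sets, which (after a standard reduction, e.g.\ passing to a compactification or exploiting inner compact regularity) may be assumed compact with $F_s\supseteq F_{s\frown i}$. Fixing $\varepsilon>0$, the heart of the proof is the recursive selection of integers $n_1,n_2,\ldots$ so that for
\[
B_k=\bigcup\bigl\{\bigcap_n F_{\sigma|_n}:\sigma\in\N^\N,\ \sigma(j)\le n_j\text{ for all }j\le k\bigr\},
\]
one has $\varphi^*(B)-\varphi^*(B_k)<\varepsilon$ for every $k$. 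The existence of each $n_{k+1}$ rests on countable subadditivity of $\varphi^*$ along the monotone union over $\sigma(k+1)$, which I would derive from submodularity together with the upper-continuity behavior of $\varphi^*$ on increasing sequences. Defining the finite unions of compacts $K_k=\bigcup\{F_{\sigma|_k}:\sigma(j)\le n_j,\,j\le k\}$, one has $B_k\subseteq K_k$ and $K=\bigcap_k K_k\subseteq B$; right-continuity of $\varphi_{\cK}$ then gives $\varphi_{\cK}(K)\ge \varphi^*(B)-\varepsilon$, so $\varphi_*(B)\ge\varphi^*(B)-\varepsilon$, and the reverse inequality is immediate.

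The main obstacle is the capacitability half. Submodularity of $\varphi^*$ follows fairly mechanically from the compact case via open-set approximation, but the Souslin-scheme argument requires two delicate ingredients: upgrading submodularity on the compact algebra to genuine countable subadditivity of $\varphi^*$, and ensuring that the limiting compact set $K$ actually lies inside $B$ rather than merely inside its closure. Right-continuity of $\varphi_{\cK}$ is precisely the tool that balances these competing demands, letting the approximating compacts be neither so large that they escape $B$ nor so small that they miss the outer capacity.
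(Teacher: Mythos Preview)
The paper does not prove this theorem at all: it is quoted as a classical result of Choquet, with references to \cite{choquet1954theory} and \cite[Chapter~30]{kechris2012classical}, and is used as a black box in Section~\ref{sec:borel}. So there is no proof in the paper to compare your attempt against.

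That said, your outline is essentially the standard route. A couple of small points. In the submodularity argument on open sets, right-continuity plays no role; once you split $L=L_U\cup L_V$, monotonicity and submodularity of $\varphi_{\cK}$ alone give $\varphi_*(U)+\varphi_*(V)\ge \varphi_{\cK}(K)+\varphi_{\cK}(L)$ directly, and you then take the supremum over $K$ and $L$. In the capacitability step, the property you actually need for the recursive choice of $n_{k+1}$ is continuity of $\varphi^*$ \emph{from below} along increasing sequences (your phrase ``upper-continuity behavior on increasing sequences'' is a slip); this is a separate lemma, obtained by first showing $\varphi_*$ is lower-continuous on open sets via compact exhaustion. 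Finally, the inclusion $K=\bigcap_k K_k\subseteq B$ comes from a K\H{o}nig-type compactness argument on the tree of finite sequences, not from right-continuity; right-continuity is what passes from $\varphi_{\cK}(K_k)$ to $\varphi_{\cK}(K)$, exactly as you wrote in the preceding line.
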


We also need a simple but very useful technical lemma from~\cite{choquet1954theory}. 

\begin{lem}[Choquet]\label{LEM:CHOQ}
Let $\phi$ be an increasing submodular set function on a lattice family 
$(J,\cF)$. For $i\in[n]$, let $A_i\subseteq B_i$ be pairs of sets in $\cF$. Then
\[
\phi(\cup_{i=1}^n B_i)-\phi(\cup_{i=1}^n A_i)\le \sum_{i=1}^n(\phi(A_i)-\phi(B_i)).
\]
\end{lem}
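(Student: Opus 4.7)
The plan is a short induction on $n$, using only submodularity in its diminishing-returns form as stated earlier in the paper. A sanity remark first: since $A_i\subseteq B_i$ and $\phi$ is increasing, $\phi(A_i)-\phi(B_i)\le 0$, while the left-hand side is non-negative, so the intended right-hand side is $\sum_i(\phi(B_i)-\phi(A_i))$; I will prove that form.

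The base $n=1$ reduces to an equality. For the inductive step, set $U=\bigcup_{i=1}^{n-1}A_i$ and $V=\bigcup_{i=1}^{n-1}B_i$; both lie in $\cF$ because $\cF$ is closed under finite unions, and $U\subseteq V$. The key step is to insert the pivot $V\cup A_n$ and telescope:
\[
\phi(V\cup B_n)-\phi(U\cup A_n)=\bigl[\phi(V\cup B_n)-\phi(V\cup A_n)\bigr]+\bigl[\phi(V\cup A_n)-\phi(U\cup A_n)\bigr].
\]
The first bracket is controlled by the diminishing-returns inequality $\phi(A\cup X)-\phi(A\cup Y)\ge\phi(X)-\phi(Y)$ with $A=V$, $X=A_n\subseteq Y=B_n$, yielding $\phi(V\cup B_n)-\phi(V\cup A_n)\le\phi(B_n)-\phi(A_n)$. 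The second bracket is controlled by the same inequality with $A=A_n$, $X=U\subseteq Y=V$, yielding $\phi(V\cup A_n)-\phi(U\cup A_n)\le\phi(V)-\phi(U)$. The inductive hypothesis applied to the $n-1$ pairs $A_i\subseteq B_i$ bounds $\phi(V)-\phi(U)$ by $\sum_{i=1}^{n-1}(\phi(B_i)-\phi(A_i))$, and summing the three estimates gives the claim.

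Because the argument is a single induction driven by two invocations of diminishing returns, there is no real obstacle; the only design choice is the pivot $V\cup A_n$, which cleanly separates the ``horizontal'' replacement $A_n\to B_n$ from the ``vertical'' replacement $U\to V$. Monotonicity of $\phi$ enters implicitly through the diminishing-returns form (it is needed to upgrade bare submodularity to the equivalent form cited in the paper), and the lattice structure of $\cF$ guarantees that every intermediate set appearing in the telescope remains in the domain of $\phi$.
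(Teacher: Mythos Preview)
Your proof is correct, and your observation about the sign is right: as written in the paper the inequality is false for nontrivial inputs (the left-hand side is nonnegative while each summand on the right is nonpositive), and the use of the lemma in the proof of Lemma~\ref{lem:clopen_approx2} confirms that $\sum_i(\phi(B_i)-\phi(A_i))$ is what was intended. Your induction with the pivot $V\cup A_n$ and two applications of diminishing returns is clean and complete; the remark that monotonicity is what makes the paper's diminishing-returns form available is also accurate.

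There is nothing to compare against: the paper does not prove Lemma~\ref{LEM:CHOQ} but simply quotes it from Choquet~\cite{choquet1954theory}. Your argument is the standard short proof and would be a suitable replacement for the bare citation.
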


Turning to the extension from the clopen sets to Borel sets, this will be carried out in two steps: first, we extend the setfunction from the family $\cH$ of clopen subsets to the family $\cK$ of compact subsets, then to the family $\cB$ of Borel sets. We will denote these setfunctions by $\psih$, $\psik$ and $\psib$. We start with $\psih=\fg$, where $\fg$ is the function provided by Corollary~\ref{cor:sofic_weak}. 

First, we extend $\fg$ to all compact sets by setting
\begin{equation}
\psik(K)\coloneqq\inf_{\substack{K\subseteq H\\ H\in\cH}}\psih(H).\label{eq:psi}    
\end{equation}
Since $\psih$ is increasing, $\psik$ is indeed an extension of $\psih$. 
    
\begin{lem}\label{lem:k}
The function $\psik$ is increasing, submodular and continuous from the right. 
\end{lem}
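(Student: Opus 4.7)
The plan is to verify the three properties separately, exploiting the fact that in the Cantor set the clopen sets form a basis of the topology, so every open set containing a compact set can be approximated from outside by a clopen set. Throughout, recall that $\psih$ is increasing and submodular on the lattice family $\cH$ (inherited from the tower) and that the sequence is uniformly bounded, so the infimum in~\eqref{eq:psi} is finite.

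\emph{Monotonicity.} If $K_1\subseteq K_2$, then any clopen $H\supseteq K_2$ also satisfies $H\supseteq K_1$, so the infimum defining $\psik(K_1)$ is taken over a superset of that defining $\psik(K_2)$; hence $\psik(K_1)\le\psik(K_2)$.

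\emph{Submodularity.} Given compact sets $K_1,K_2$ and $\varepsilon>0$, I would pick clopen sets $H_i\supseteq K_i$ with $\psih(H_i)\le\psik(K_i)+\varepsilon/2$. Since $\cH$ is a lattice, $H_1\cap H_2$ and $H_1\cup H_2$ lie in $\cH$ and contain $K_1\cap K_2$ and $K_1\cup K_2$, respectively. Submodularity of $\psih$ then gives
\[
\psik(K_1)+\psik(K_2)+\varepsilon\ge\psih(H_1)+\psih(H_2)\ge\psih(H_1\cup H_2)+\psih(H_1\cap H_2)\ge\psik(K_1\cup K_2)+\psik(K_1\cap K_2),
\]
and letting $\varepsilon\to 0$ finishes the step.

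\emph{Continuity from the right.} Fix $K\in\cK$ and $\varepsilon>0$, and choose a clopen $H\supseteq K$ with $\psih(H)<\psik(K)+\varepsilon$. Setting $U\coloneqq H$, which is open, any compact $K'\subseteq U$ is contained in the clopen set $H$, so $H$ is a legitimate witness in the infimum defining $\psik(K')$, giving $\psik(K')\le\psih(H)<\psik(K)+\varepsilon$ as required.

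The argument is essentially mechanical, and no step presents a serious obstacle; the only point worth flagging is that the entire construction relies on $\cH$ being a \emph{basis} for the topology of the Cantor set (so that in the last step the clopen witness $H$ itself serves as the required open neighborhood), which is what makes the rightward continuity drop out for free.
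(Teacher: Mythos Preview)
Your proof is correct and follows essentially the same route as the paper's own argument: monotonicity via containment of index sets for the infimum, submodularity via $\varepsilon$-approximating clopen supersets and applying submodularity of $\psih$ on $\cH$, and right-continuity by taking the clopen witness itself as the open neighborhood $U$. The only cosmetic difference is that you use $\varepsilon/2$ per set where the paper uses $\varepsilon$ and absorbs a $2\varepsilon$; otherwise the arguments coincide.
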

\begin{proof}
If $K\subseteq K'$, then we have $\{H\in\cH|\ K'\subseteq H\}\subseteq\{H\in\cH|\ K\subseteq H\}$, so the infimum in~\eqref{eq:psi} cannot be larger for $K$ than for $K'$. This proves that $\psi$ is increasing.

Let $K_1,K_2$ be compacts sets, $\varepsilon>0$, and $H_1,H_2$ be clopen sets such that $K_1\subseteq H_1$, $K_2\subseteq H_2$ and $\psik(H_1)-\psik(K_1)<\varepsilon$, $\psik(H_2)-\psik(K_2)<\varepsilon$. Then 
\begin{align*}
\psik(K_1)+\psik(K_2)
{}&{}> \psik(H_1)+\psik(H_2)-2\varepsilon\\
{}&{}\ge \psik(H_1\cap H_2)+\psik(H_1\cup H_2)-2\varepsilon\\
{}&{}\ge \psik(K_1\cap K_2)+\psik(K_1\cup K_2)-2\varepsilon,
\end{align*}
where the second inequality holds by submodulartiy of $\psih$ on the clopen sets, while the third inequality holds by $K_1\cap K_2\subseteq H_1\cap H_2$ and $K_1\cup K_2\subseteq H_1\cup H_2$. Since the inequality is true for any $\varepsilon>0$, taking $\varepsilon\to0$ proves the submodularity of $\psi$.

Finally, let $\varepsilon>0$ and $K\in\cK$. Then, by the definition of the extension, there exists not only an open but even a clopen set $U$ with $\psik(K')<\psik(K)+\varepsilon$ for all $K'\subseteq U$, $K'\in\cC$. This proves continuity from the right.
\end{proof}

We use Theorem~\ref{thm:choquet} to extend our function from $\cK$ to $\cB$. By Lemma~\ref{lem:k}, $\psik$ is an alternating capacity of order $2$, so we can apply Choquet's theorem to extend the function to $\cB$ by defining $\psib(B)\coloneqq\psi_*(B)=\psi^*(B)$.

\begin{rem}
    There is another way to extend the function to all Borel sets. First, let us define $\overline{\psi}(S)\coloneqq \inf_{S\subseteq H\in\cH}\psih(H)$. Note that the value only depends on the closure of the set, so we can also express it as $\overline{\psi}(S)=\psik(\overline{S})$. Second, we make this function continuous from below by setting $\psib'(S)\coloneqq \overline{\psi}_{\bullet}(S) =\inf_{\mathcal{S}}\lim_{n\to\infty}\overline{\psi}(S_n)$, where $\mathcal S=\{(S_n\colon n\in\N)\mid|\ S_n\text{ is ascending},\ \cup_n S_n=S  \}$. Note that this approach is similar to the definition of the Lebesgue measure, where first one defines the Jordan measure for clopen subsets, then extend it to all subsets as the Jordan outer measure, and then makes it continuous from below to obtain the Lebesgue outer measure. One can prove that the function thus obtained is the same as the construction by Choquet, see~\cite[Chapter 6]{lovasz2023submodular} for further details.
\end{rem}

Since $\psib$ extends $\psih$, is clear that $Q_k(\psih)\subseteq Q_k(\psib)$. To prove that their closures are equal, we show that each Borel partition can be approximated by a clopen partition.

\begin{lem}\label{lem:clopen_approx}
    Let $K_1,\dots,K_q$ be pairwise disjoint compact subsets and $U_1,\dots,U_q$, open subsets of the Cantor set $C$ such that $K_i\subseteq U_i$ and $\cup_{i=1}^q U_i=C$. Then for every $i\in[q]$ there exist clopen sets $H_i$ such that $K_i\subseteq H_i\subseteq U_i$ and $\{H_1,\dots,H_q\}$ forms a partition of $C$. 
\end{lem}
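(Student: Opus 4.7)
The plan is to use only one basic fact about the Cantor set: it is \emph{zero-dimensional}, i.e.\ it has a basis of clopen sets. From this I will derive, and then repeatedly invoke, a \textbf{sandwich principle}: whenever $K\subseteq U\subseteq C$ with $K$ compact and $U$ open, there is a clopen $V$ with $K\subseteq V\subseteq U$. The proof is standard: each $x\in K$ has a clopen basis neighborhood contained in $U$, and compactness of $K$ yields a finite subcover whose union is the desired clopen $V$. I also note that every clopen subspace of $C$ is again a compact zero-dimensional metrizable space, so the same sandwich principle holds inside $C\setminus H$ for any clopen $H$.

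I will prove the lemma by induction on $q$. The case $q=1$ is trivial, as the covering hypothesis forces $U_1=C$ and one takes $H_1=C$. For the inductive step, the idea is to build $H_1$ first and then recurse on $C\setminus H_1$, applied to $K_2,\dots,K_q$ and $U_2,\dots,U_q$. For the recursion to make sense, $H_1$ must meet two constraints besides $K_1\subseteq H_1\subseteq U_1$: it should be \emph{disjoint} from $K_2,\dots,K_q$ (so these compacts survive in the complement), and it should \emph{swallow} the set $L:=C\setminus\bigcup_{i=2}^q U_i$ (so that $U_2,\dots,U_q$ still cover $C\setminus H_1$). These requirements combine into the single sandwich condition
\[
K_1\cup L \;\subseteq\; H_1 \;\subseteq\; U_1\setminus (K_2\cup\dots\cup K_q).
\]

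The key verification is that the compact set on the left is genuinely contained in the open set on the right. Here $L$ is closed, hence compact; $L\subseteq U_1$ by the covering hypothesis $\bigcup_i U_i=C$; $L$ is disjoint from each $K_j$ ($j\ge 2$) because $K_j\subseteq U_j$; and $K_1$ is disjoint from each $K_j$ ($j\ge 2$) by hypothesis and contained in $U_1$ by assumption. So the sandwich principle produces a clopen $H_1$ satisfying the displayed inclusions. Setting $C':=C\setminus H_1$ (clopen, hence again zero-dimensional compact metrizable), the data $\bigl(K_i,\,U_i\cap C'\bigr)_{i=2}^{q}$ verify the hypotheses of the lemma on $C'$: the $K_i$'s are pairwise disjoint compacts contained in $C'$, each $K_i\subseteq U_i\cap C'$, and $\bigcup_{i=2}^q(U_i\cap C')=C'$ because $C'\subseteq C\setminus L=\bigcup_{i\ge 2}U_i$. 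The inductive hypothesis (valid in the same form on any zero-dimensional compact metric space, by the same proof) yields clopen $H_2,\dots,H_q$ partitioning $C'$ with $K_i\subseteq H_i\subseteq U_i$, and $H_1,H_2,\dots,H_q$ is then the desired clopen partition of $C$.

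The only real conceptual obstacle is realizing that $H_1$ must absorb the ``orphan'' set $L$: without this extra containment, the complement $C\setminus H_1$ would fail to be covered by $U_2,\dots,U_q$ and the induction would break down. Everything else is routine manipulation of compact/open separation in a zero-dimensional space, made available by the sandwich principle.
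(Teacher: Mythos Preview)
Your proof is correct, and it takes a genuinely different route from the paper's argument. The paper proceeds in one shot: for every point $x\in C$ it selects a small clopen neighborhood $U_x$ that meets at most one $K_i$ and lies inside the corresponding $U_i$ (or, if $x$ lies outside all the $K_i$'s, inside some $U_i$ while avoiding all $K_j$'s); compactness yields a finite subcover, and then each atom of the algebra generated by this finite family is assigned to a bucket $H_i$ by a simple case analysis. Your argument instead inducts on $q$, peeling off one clopen block $H_1$ at a time via a single application of the sandwich principle to the pair $K_1\cup L\subseteq U_1\setminus\bigcup_{j\ge 2}K_j$, where $L=C\setminus\bigcup_{j\ge 2}U_j$ is exactly what $H_1$ must swallow for the induction to go through. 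Your approach is tidier and isolates the one nontrivial observation (absorbing the orphan set $L$) cleanly; the paper's approach is more constructive and avoids recursion, at the cost of a little ad hoc case analysis when distributing the atoms. Both rest only on zero-dimensionality and compactness, so neither is more general.
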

\begin{proof}
    For all $x\in C$, let us choose a clopen subset $U_x$ containing $x$ as follows. If $x\in K_i$, then $U_x\subseteq U_i$ and $U_x\cap K_j=\emptyset$ for all $j\not=i$. If $x\in (\cup_{i=1}^q K_i)^c$, then $U_x\cap(\cup_{i=1}^q K_i)=\emptyset$ and there exists an index $i\in[q]$ with $U_x\subseteq U_i$. As the clopen subsets form a basis for the topology, we can find a $U_x$ with the given property for all $x\in C$. Since $C\subseteq \cup_{x\in C} U_x$ and $C$ is compact, there exist a finite list of elements $x_1,...,x_n$ such that $C\subseteq \cup_{i=1}^n U_{x_i}. $  
    
    Let $V_1,V_2,...,V_\ell$ be the atoms in the set-algebra generated by the clopen sets $U_{x_1},U_{x_2},\dots, U_{x_n}$. Consider an arbitrary index $j\in[\ell]$. If $V_j$ intersects $K_i$, then it cannot intersect $K_k$ for $k\neq i$. Indeed, $V_j\subseteq U_{x_t}$ for some $t\in[n]$, and $U_{x_t}$ intersects at most one of the compact sets $K_1,\dots,K_q$. Furthermore, we have $V_j\subseteq U_{x_t}\subseteq U_i$. In this case, we add $V_j$ to the set $H_i$. If $V_j$ does not intersect any of the compact sets $K_i$, then it is still covered by $U_{x_\ell}$ for some $\ell\in[n]$, and so it is covered by $U_i$ for some $i\in[q]$. In this case, we choose such an index $i$ and add $V_j$ to the set $H_i$. 
    
    By construction, we clearly have $H_i\subseteq U_i$ and also $K_i\subseteq L^{n}_i\subseteq H_i$, where $L^{n}_i$ consists of the $n^{\text{th}}$ level clopen sets which intersects $K_i$.
\end{proof}

\begin{lem}\label{lem:clopen_approx2}
    Let $F\colon C\to[q]$ be a Borel measurable function. Then for all $\varepsilon>0$, there exists a continuous map $G\colon C\to[q]$ such that $d(\psib\circ F^{-1},\psib\circ G^{-1})<\varepsilon$. 
\end{lem}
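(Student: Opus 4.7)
The plan is to approximate the Borel partition $\{F^{-1}(\{i\})\}_{i=1}^q$ by a clopen partition using the regularity of $\psib$ together with Lemma~\ref{lem:clopen_approx}, and then transfer the resulting pointwise closeness through arbitrary unions via the Choquet inequality (Lemma~\ref{LEM:CHOQ}).

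First, I would fix a small $\delta>0$ (to be chosen at the end) and set $B_i=F^{-1}(\{i\})$. Since $\psib$ equals both the inner and the outer capacity on every Borel set (Theorem~\ref{thm:choquet}), I can pick for each $i\in[q]$ a compact set $K_i\subseteq B_i$ with $\psik(K_i)>\psib(B_i)-\delta/2$ and an open set $U_i\supseteq B_i$ with $\psib(U_i)<\psib(B_i)+\delta/2$, so that $\psib(U_i)-\psib(K_i)<\delta$ for every $i$. The $K_i$ are pairwise disjoint (as the $B_i$ are), and the $U_i$ cover $C$ (since $\bigcup_iB_i=C$); Lemma~\ref{lem:clopen_approx} then produces a clopen partition $\{H_1,\dots,H_q\}$ of $C$ with $K_i\subseteq H_i\subseteq U_i$. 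I define $G\colon C\to[q]$ by $G(x)=i$ for $x\in H_i$, which is continuous because each $H_i$ is clopen.

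Next, I would bound the sup-norm distance between the setfunctions $\psib\circ F^{-1}$ and $\psib\circ G^{-1}$ on $2^{[q]}$. For any $A\subseteq[q]$, both $F^{-1}(A)$ and $G^{-1}(A)$ lie between $\bigcup_{i\in A}K_i$ and $\bigcup_{i\in A}U_i$, so by monotonicity of $\psib$ followed by Lemma~\ref{LEM:CHOQ} applied to the pairs $K_i\subseteq U_i$,
\[
\bigl|\psib(F^{-1}(A))-\psib(G^{-1}(A))\bigr|\le \psib\Bigl(\bigcup_{i\in A}U_i\Bigr)-\psib\Bigl(\bigcup_{i\in A}K_i\Bigr)\le\sum_{i\in A}\bigl(\psib(U_i)-\psib(K_i)\bigr)<q\delta.
\]

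Finally, I would convert this sup-norm bound into a bound on the pseudometric $d$. Any $k$-quotient of $\psib\circ F^{-1}$ is induced by a map $\sigma\colon[q]\to[k]$, and the same $\sigma$ induces a $k$-quotient of $\psib\circ G^{-1}$; the two resulting points in $\R^{2^k}$ differ by at most $q\delta$ in each coordinate and hence by at most $2^{k/2}q\delta$ in Euclidean norm. Consequently $\dH{k}(Q_k(\psib\circ F^{-1}),Q_k(\psib\circ G^{-1}))\le 2^{k/2}q\delta$, and summing the series in~\eqref{EQ:HAUS-DIST} yields $d(\psib\circ F^{-1},\psib\circ G^{-1})\le q\delta\sum_{k\ge1}2^{-k/2}$, which is smaller than $\varepsilon$ provided $\delta$ is small enough. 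The main obstacle is keeping the error linear in $q$ when passing from the pointwise estimates $\psib(U_i)-\psib(K_i)<\delta$ to estimates on arbitrary unions; this is precisely what Lemma~\ref{LEM:CHOQ} provides, and without it the naive bound could blow up exponentially in $q$.
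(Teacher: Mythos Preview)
Your argument is correct and follows essentially the same route as the paper: approximate each $B_i$ from inside by a compact $K_i$ and from outside by an open $U_i$ using Theorem~\ref{thm:choquet}, invoke Lemma~\ref{lem:clopen_approx} to get the clopen partition, and then use Lemma~\ref{LEM:CHOQ} to control $\psib$ on arbitrary unions. Your final paragraph converting the coordinatewise bound into a bound on the pseudometric~\eqref{EQ:HAUS-DIST} is more than the paper does (it stops at the $q\delta$ coordinatewise estimate, reading $d$ simply as a distance in $\R^{2^q}$), but it is harmless and the computation is fine.
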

\begin{proof}
    Let $B_i=F^{-1}(\{i\})$ and let $\delta>0$. Then, by Theorem~\ref{thm:choquet}, there exists $K_i\subseteq B_i\subseteq U_i$ such that $K_i$ is compact, $U_i$ is open, $\psib(B_i)-\psib(K_i)<\delta$ and $\psib(U_i)-\psib(B_i)<\delta$. Clearly, the sets $K_1,\dots,K_q$ are pairwise disjoint while $\cup_{i=1}^q U_i$ covers the whole space. By Lemma~\ref{lem:clopen_approx}, there exists a partition $C=H_1\cup\dots\cup H_q$ into clopen sets such that $K_i\subseteq H_i\subseteq U_i$. Let $G$ be defined as $G(i)\coloneqq \{j\mid i\in H_j\}$.

    For any set $A\subseteq [q]$, let $B_A=\cup_{i\in A}B_i$, $K_A=\cup_{i\in A}K_i$, $H_A=\cup_{i\in A}H_i$ and $U_A=\cup_{i\in A}U_i$. Using Lemma~\ref{LEM:CHOQ}, we have $\psib(B_A)-\psib(K_A)<\delta|A|$ and $\psib(U_A)-\psib(B_A)<\delta|A|$, implying $|\psib(B_A)-\psib(H_A)|<\delta|A|.$ Therefore, $\psib\circ F^{-1}$ and $\psib\circ G^{-1}$ can differ by at most $\delta q$ in each coordinate. By choosing $\delta$ small enough, the statements follows.
\end{proof}

We get the following corollary.

\begin{cor}\label{cor:h2b}
    The setfunctions $\psih$ and $\psib$ have the same quotient profile. Therefore, if $\fg_n\rto\psi_H$ then $\fg_n\rightarrowtail \psib$.
\end{cor}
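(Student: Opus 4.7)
The plan is to show $\overline{Q}_k(\psi_{\cH})=\overline{Q}_k(\psi_{\cB})$ for every $k\in\N$, and then read off the convergence statement.

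First, I would establish the easy inclusion $Q_k(\psi_{\cH})\subseteq Q_k(\psi_{\cB})$. Any clopen partition $\cP=\{H_1,\dots,H_k\}$ of $C$ is in particular a Borel partition, and since $\psi_{\cB}$ was constructed as an extension of $\psi_{\cH}$, the two quotients $\psi_{\cH}/\cP$ and $\psi_{\cB}/\cP$ coincide. Thus every element of $Q_k(\psi_{\cH})$ lies in $Q_k(\psi_{\cB})$.

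For the reverse inclusion up to closure, I would invoke Lemma~\ref{lem:clopen_approx2}. Let $\alpha\in Q_k(\psi_{\cB})$, so $\alpha=\psi_{\cB}\circ F^{-1}$ for some Borel measurable $F\colon C\to [k]$. Given $\varepsilon>0$, the lemma produces a continuous $G\colon C\to [k]$ with $d(\psi_{\cB}\circ F^{-1},\psi_{\cB}\circ G^{-1})<\varepsilon$. Since $[k]$ carries the discrete topology, the fibers $G^{-1}(i)$ are clopen, so $\psi_{\cB}\circ G^{-1}=\psi_{\cH}\circ G^{-1}\in Q_k(\psi_{\cH})$. Thus $\alpha$ lies in the $\varepsilon$-neighborhood of $Q_k(\psi_{\cH})$ in the pseudometric $d$, and since this is the single pseudometric that sums the $\dH{k}$-distances, each coordinate Hausdorff distance also becomes arbitrarily small. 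Hence $Q_k(\psi_{\cB})\subseteq \overline{Q_k(\psi_{\cH})}$, and combining with the first inclusion, $\overline{Q}_k(\psi_{\cH})=\overline{Q}_k(\psi_{\cB})$ for each $k$.

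For the second sentence of the corollary, assume $\fg_n\rto\psi_{\cH}$, i.e., $\dH{k}(Q_k(\fg_n),Q_k(\psi_{\cH}))\to 0$ for every $k$. Since Hausdorff distance depends only on closures, and we have just shown $\overline{Q}_k(\psi_{\cH})=\overline{Q}_k(\psi_{\cB})$, it follows immediately that $\dH{k}(Q_k(\fg_n),Q_k(\psi_{\cB}))\to 0$ for every $k$, which is exactly $\fg_n\rightarrowtail\psi_{\cB}$. The only step requiring genuine content is the clopen approximation of a Borel partition, and that work has already been done in Lemma~\ref{lem:clopen_approx2} via Choquet's regularity and Lemma~\ref{lem:clopen_approx}; so the present corollary is a clean packaging of those results and the extension construction from Section~\ref{sec:borel}.
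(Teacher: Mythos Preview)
Your proof is correct and follows exactly the paper's approach: the easy inclusion $Q_k(\psi_{\cH})\subseteq Q_k(\psi_{\cB})$, then Lemma~\ref{lem:clopen_approx2} for the reverse inclusion up to closure, then the observation that Hausdorff distance depends only on closures. One small wording issue: the $d$ appearing in Lemma~\ref{lem:clopen_approx2} is the Euclidean (or sup) distance in $\R^{2^k}$ between two finite setfunctions, not the pseudometric~\eqref{EQ:HAUS-DIST}, so your aside ``since this is the single pseudometric that sums the $\dH{k}$-distances'' is misplaced; what you actually need (and have) is that $\alpha$ is arbitrarily close in $\R^{2^k}$ to an element of $Q_k(\psi_{\cH})$, which directly gives $\alpha\in\overline{Q_k(\psi_{\cH})}$.
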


\begin{proof}
    Clearly, $Q_k(\psih)\subseteq Q_k(\psib)$. By Lemma~\ref{lem:clopen_approx2}, it follows that $Q_k(\psib)\subseteq \overline{Q_k(\psih)}$, thus $\overline{Q_k(\psih)}=\overline{Q_k(\psib)}$.
\end{proof}

The proof of Theorem~\ref{thm:complete} follows by Corollaries~\ref{cor:sofic_weak} and~\ref{cor:h2b}.

\begin{cor}
    The space of increasing normalized submodular setfunctions on Borel spaces, endowed with the pseudometric~\eqref{EQ:HAUS-DIST}, is compact. 
\end{cor}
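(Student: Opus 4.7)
The plan is to establish sequential compactness and then invoke the fact that sequential compactness is equivalent to compactness in a pseudometric space; in view of the completeness delivered by Theorem~\ref{thm:complete}, it suffices to extract a quotient-Cauchy subsequence from any given sequence $(\varphi_n\colon n\in\N)$ of increasing normalized submodular setfunctions.

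First, I would use normalization: values of all $\varphi_n$ lie in a common bounded interval, say $[0,1]$, so for each fixed $k\in\N$, the sets $\overline{Q}_k(\varphi_n)$ are compact subsets of the cube $[0,1]^{2^k}$. By the Blaschke selection theorem, the hyperspace of nonempty compact subsets of this cube is itself compact under the Hausdorff distance $\dH{k}$. Hence for each fixed $k$ there is a subsequence along which $\overline{Q}_k(\varphi_n)$ converges, and a standard diagonal extraction produces a single subsequence $(\varphi_{n_j})$ such that $\overline{Q}_k(\varphi_{n_j})$ converges in $\dH{k}$ for \emph{every} $k\in\N$ simultaneously.

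By definition, $(\varphi_{n_j})$ is then quotient-convergent. Convergence in the combined pseudometric $d$ of~\eqref{EQ:HAUS-DIST} follows from dominated convergence applied to the series, since each summand $\dH{k}$ is uniformly bounded (by normalization) and damped by $2^{-k}$. Theorem~\ref{thm:complete} now supplies an increasing submodular limit $\varphi$ on the Borel sets of the Cantor set with $\varphi_{n_j}\rightarrowtail\varphi$; the normalization $0\le\varphi\le 1$ is inherited in the limit because it is a finitary property visible on $Q_1(\varphi)$.

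I do not expect any real obstacle: the corollary is essentially a Blaschke-plus-diagonalization packaging of the completeness statement in Theorem~\ref{thm:complete}. The only minor point to verify carefully is that per-$k$ Hausdorff convergence assembles to convergence in the full pseudometric $d$, which is handled by the dominated convergence remark above, and that the limit object lies in the stated class, which is automatic since ``increasing'', ``submodular'', and ``normalized'' are all finitary properties preserved under quotient-convergence.
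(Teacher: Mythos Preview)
Your argument is correct and follows essentially the same route as the paper: both rely on the compactness of each hyperspace $\cK([0,1]^{2^k})$ under $\dH{k}$ together with the completeness supplied by Theorem~\ref{thm:complete}. The only cosmetic difference is packaging: the paper embeds the quotient profile into the compact product $\prod_{k\in\N}\cK([0,1]^{2^k})$ to deduce total boundedness (and then uses ``totally bounded $+$ complete $=$ compact''), whereas you unfold this into an explicit Blaschke-plus-diagonalization extraction of a quotient-Cauchy subsequence; these are two standard phrasings of the same idea.
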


\begin{proof}
    As the metric comes from the embedding into the compact metric space $\prod_{k\in\N} \cK([0,1]^{2^k})$, where $\cK(A)$ is the compact subsets of $A$ endowed with the Hausdorff metric, the space of increasing normalized submodular setfunctions on Borel spaces is totally bounded. Theorem~\ref{thm:complete} states that this space is complete. From these observations, it follows that the space in question is compact.
\end{proof}

\section{Finite approximations}
\label{sec:finite}

In this section we answer the {\it soficity question}, which is the natural question to characterize the limit objects of convergent sequences. In our case the answer is easy: Every bounded setfunction defined on a set-algebra is a limit.

\begin{thm}\label{thm:sofic_strong}
    Let $\varphi$ be a bounded setfunction defined on a set-algebra. Then there exists a sequence of finite quotients $\varphi_n$ of $\varphi$ such that $\varphi_n\rto\varphi$.
\end{thm}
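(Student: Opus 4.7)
My plan is to construct the approximating sequence as $\varphi_n:=\varphi/\mathcal{R}_n$ for a suitable increasing sequence of finite measurable partitions $\mathcal{R}_n$ of the ground set $J$, chosen so that they eventually refine every member of a countable family of ``test'' partitions whose associated quotients are dense in $Q_k(\varphi)$ for every $k$ simultaneously.

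First I would exploit that $\varphi$ is bounded, so $Q_k(\varphi)$ sits inside a bounded subset of $\R^{2^k}$ and is therefore totally bounded, hence separable. Fix a countable dense subset $\{\alpha_{k,j}\colon j\in\N\}\subseteq Q_k(\varphi)$ for each $k$. By the very definition of $Q_k(\varphi)$, for each pair $(k,j)$ there is a finite measurable $k$-partition $\cP_{k,j}$ of $J$ with $\alpha_{k,j}=\varphi/\cP_{k,j}$. Enumerating the countable collection $\{\cP_{k,j}\colon k,j\in\N\}$ diagonally as $\cS_1,\cS_2,\dots$ and letting $\mathcal{R}_n$ be the common refinement of $\cS_1,\dots,\cS_n$ (still a finite measurable partition), I would set $\varphi_n:=\varphi/\mathcal{R}_n$, which is a finite quotient of $\varphi$.

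To verify that $\varphi_n\rto\varphi$, fix $k$. One side of the Hausdorff distance is free: Lemma~\ref{lem:quotient-prof-lift} gives $Q_k(\varphi_n)\subseteq Q_k(\varphi)$. For the other side, observe that a $k$-quotient of $\varphi_n$ is exactly $\varphi/\cP$ for some $k$-partition $\cP$ coarsening $\mathcal{R}_n$; in particular, as soon as $\mathcal{R}_n$ refines a given $\cP_{k,j}$, which happens for all sufficiently large $n$, we have $\alpha_{k,j}\in Q_k(\varphi_n)$. Given $\varepsilon>0$, total boundedness lets me pick a finite $\varepsilon$-net of $Q_k(\varphi)$; for each net-point $\beta_i$ choose an index $j_i$ with $\|\beta_i-\alpha_{k,j_i}\|<\varepsilon$. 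For all sufficiently large $n$, each $\cP_{k,j_i}$ has appeared among $\cS_1,\dots,\cS_n$, so every $\alpha_{k,j_i}$ lies in $Q_k(\varphi_n)$, which forces $\dH{k}(Q_k(\varphi_n),Q_k(\varphi))\le 2\varepsilon$. Hence $\dH{k}(Q_k(\varphi_n),Q_k(\varphi))\to 0$ for every $k$, and summation with the weights $2^{-k}$ (controlling the tail via the uniform bound on $\varphi$) gives $d(\varphi_n,\varphi)\to 0$.

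I do not anticipate any substantive obstacle: none of the finer structure of $\varphi$ is used, and the argument is essentially a diagonal construction. The only mildly delicate point is that separability of $Q_k(\varphi)$ is needed uniformly across $k$, which is why the test partitions have to be enumerated across all $k$ at once. Separability itself is automatic, since each $Q_k(\varphi)$ is a bounded subset of a finite-dimensional Euclidean space.
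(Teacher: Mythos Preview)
Your argument is correct and essentially the same as the paper's: both pick finitely many test quotients forming nets in the first several $Q_k(\varphi)$ and take as $\varphi_n$ the quotient by the common refinement of the witnessing partitions (what the paper calls a ``common lift'', via Lemma~\ref{lem:quotient-prof-lift}). The only cosmetic difference is that the paper chooses fresh $(1/n)$-nets $A_k^n$ at each stage and lifts $B_n=\cup_{k\le n}A_k^n$, whereas you fix one countable dense family up front and refine cumulatively; your version has the minor bonus that $(\varphi_n)$ is automatically a tower.
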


It is easy to see that we may assume that $\varphi_n$ is defined on the subsets of $[n]$.

\begin{proof}
The construction will be similar to part of the proof of Theorem~\ref{thm:complete}. For every $k,n\in\N$, let $A_k^{n}$ be a finite $(1/n)$-net in $Q_k(\varphi)$, and set $B_n=\cup_{k=1}^n A_k^{n}$. So $B_n$ intersects each of the first $n$ quotient sets in a finite $(1/n)$-net. Let $\varphi_n$ be a common lift of the setfunctions in $B_n$. We claim that $\varphi_n$ quotient-converges to $\varphi$ as $n$ tends to infinity. The functions $\varphi_n$ are finite and are quotients of $\varphi$, so $Q_\ell(\varphi_n)\subseteq Q_\ell(\varphi)$. For all $\ell\le n$, $Q_\ell(\varphi_n)$ contains a $(1/n)$-net of $Q_\ell(\varphi)$, thus $\dH{\ell} (Q_\ell(\varphi_n),Q_\ell(\varphi))\le 1/n$. Therefore, $Q_\ell(\varphi_n)\to Q_\ell(\varphi)$ for all $\ell\in\N$, implying $\varphi_n\rto\varphi$.
\end{proof}

Under further restrictions on $\fg$, we can give an explicit construction for a sequence of finite setfunctions quotient-converging to $\fg$. We need the following technical lemma.

\begin{lem}\label{LEM:AA-APPROX}
Let $\fg$ be an increasing submodular setfunction on a sigma-algebra $(J,\cB)$, continuous from above, with $\fg(\emptyset)=0$. Let $\cA\subseteq\cB$ be a set-algebra generating $\cB$. Then, for every $X\in\cB$ and every $\eps>0$ there exists $Y\in\cA$ such that $\fg(X\triangle Y) <\eps$.
\end{lem}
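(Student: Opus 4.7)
The plan is to adapt the standard measure-theoretic argument that $\cA$-approximates every set in the generated $\sigma$-algebra, using the pseudometric $d_\varphi(X,Y) := \varphi(X \triangle Y)$. Define
\[
\mathcal{D} := \{X \in \cB : \forall \eps > 0,\ \exists Y \in \cA,\ \varphi(X \triangle Y) < \eps\}.
\]
The goal reduces to showing $\mathcal{D} = \cB$. Since $\cA \subseteq \mathcal{D}$ trivially, and $\cA$ is an algebra generating $\cB$, it suffices by the monotone class theorem to verify that $\mathcal{D}$ is closed under complements, finite unions, and monotone limits.

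Before the main argument I would record two elementary consequences of the hypotheses. Monotonicity together with $\varphi(\emptyset) = 0$ gives $\varphi \ge 0$, and then submodularity yields the subadditivity $\varphi(A \cup B) \le \varphi(A) + \varphi(B)$. Combined with monotonicity applied to the inclusion $X \triangle Y \subseteq (X \triangle Z) \cup (Z \triangle Y)$, this produces a genuine triangle inequality for $d_\varphi$. Closure of $\mathcal{D}$ under complement is immediate from the identity $X^c \triangle Y^c = X \triangle Y$ (and $Y^c \in \cA$); closure under binary unions follows from $(X_1 \cup X_2) \triangle (Y_1 \cup Y_2) \subseteq (X_1 \triangle Y_1) \cup (X_2 \triangle Y_2)$ combined with subadditivity and monotonicity.

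The crux is closure under monotone limits, and this is where continuity from above enters. Suppose $X_n \in \mathcal{D}$ with $X_n \uparrow X$; the decreasing case is symmetric. The key observation is that $X \triangle X_n$ is always a \emph{decreasing} family of sets with empty intersection: in the increasing case it equals $X \setminus X_n$, and in the decreasing case $X_n \setminus X$. Continuity from above therefore gives $\varphi(X \triangle X_n) \to 0$, so for large $n$ we have $\varphi(X \triangle X_n) < \eps/2$; choosing $Y \in \cA$ with $\varphi(X_n \triangle Y) < \eps/2$ and applying the triangle inequality produces the required approximation. Hence $\mathcal{D}$ is a monotone class containing $\cA$, so $\mathcal{D} \supseteq \sigma(\cA) = \cB$. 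The only genuinely non-formal point is recognizing that continuity \emph{from above} — and not from below — is exactly the right hypothesis, because the relevant symmetric differences always form a downward family shrinking to $\emptyset$ regardless of whether $X_n$ ascends or descends to $X$.
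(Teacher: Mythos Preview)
Your proof is correct and essentially the same as the paper's: both define the class of $\cA$-approximable sets, verify closure under complements and finite unions via the inclusion $(X_1\cup X_2)\triangle(Y_1\cup Y_2)\subseteq(X_1\triangle Y_1)\cup(X_2\triangle Y_2)$ together with monotonicity and subadditivity, and handle the limit step by observing that the symmetric differences $X\triangle X_n$ decrease to $\emptyset$ so that continuity from above applies. The only cosmetic difference is that the paper shows closure under countable unions directly (via the finite partial unions $Z_n=X_1\cup\dots\cup X_n$), whereas you phrase it through the monotone class theorem---but since you already verified closure under complements and finite unions, that invocation is redundant and you have in fact shown directly that $\mathcal{D}$ is a $\sigma$-algebra.
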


\begin{proof}
Let $\cS=\{X\in\cB\mid \text{for every $\eps>0$ there exists $Y\in\cA$ with $\fg(X\triangle Y)<\eps$}\}$. We claim that $\cS$ is a sigma-algebra. We clearly have $\cA\subseteq\cS$, hence $\emptyset\in\cS$. Since for the symmetric difference we have $(J\setminus X)\triangle (J\setminus Y)=X\triangle Y$, $\cS$ is closed under complementation. For $X_1,X_2\in\cS$ and $\eps>0$, there exist sets $Y_1,Y_2\in\cA$ such that $\fg(X_i\triangle Y_i)<\eps/2$. As $(X_1\cup X_2)\triangle(Y_1\cup Y_2)\subseteq(X_1\triangle Y_1)\cup(X_2\triangle Y_2)$, the monotonicity and submodularity of $\fg$ gives 
\begin{equation*}
\fg\bigl((X_1\cup X_2)\triangle(Y_1\cup Y_2)\bigr)\le \fg(X_1\triangle Y_1)+\fg(X_2\triangle Y_2)\le\eps.
\end{equation*}
Since $Y_1\cup Y_2\in\cA$, this implies that $X_1\cup X_2\in\cS$. Thus, by $\cS$ being closed under complementation, we get that $X_1\cap X_2\in\cS$.

To show that $\cS$ is also closed under countable union, let $\eps>0$, $X_i\in\cS$ for $i\in\N$, and $X=\cup_i X_i$. For $i\in\N$, set $Z_i=X_1\cup\ldots\cup X_i$. Then $Z_i\subseteq  Z_{i+1}$ for each $i\in\N$ and $\cup_i Z_i=X$, hence the continuity of $\fg$ from above implies $\fg(X\setminus Z_n)\to 0$ as $n\to\infty$. Thus we have $\fg(X\setminus Z_n)<\eps/2$ if $n$ is large enough.  Note that $Z_n\in\cS$ since $\cS$ is closed under finite unions, so there is a set $Y\in\cA$ with $\fg(Y\triangle Z_n)<\eps/2$. By the submodularity of $\varphi$, these together lead to $\fg(X\triangle Y)<\eps$, implying $X\in\cS$. This concludes the proof of the lemma.
\end{proof}

Now we are ready to state the theorem.

\begin{thm}\label{THM:RCONV-LIMIT-SMOOTH}
Let $\fg$ be a bounded, increasing, submodular setfunction on a sigma-algebra $(J,\cB)$, continuous from above, with $\fg(\emptyset)=0$. Let $(\cP_n\colon n\in\N)$ be a refining sequence of measurable partitions of $J$ into a finite number of sets in $\cB$ such that $\cup_n\cP_n$ generates $\cB$. Then $\fg/{\cP_n} \rightarrowtail \fg$.
\end{thm}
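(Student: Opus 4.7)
The plan is to show that $Q_k(\varphi/\cP_n) \to Q_k(\varphi)$ in Hausdorff distance for every fixed $k\in\N$. The inclusion $Q_k(\varphi/\cP_n) \subseteq Q_k(\varphi)$ is immediate, since $\varphi/\cP_n$ is itself a quotient of $\varphi$, so the real content is to approximate a generic $\alpha = \varphi \circ F^{-1} \in Q_k(\varphi)$, corresponding to some measurable partition $\mathcal{Q}=(Q_1,\dots,Q_k)$ of $J$, by a $\cP_n$-measurable quotient for $n$ large.

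The key input is Lemma~\ref{LEM:AA-APPROX}. Because $\bigcup_n \cP_n$ generates $\cB$, the generated set-algebra equals $\bigcup_n \sigma(\cP_n)$, so for every $\delta>0$ and each $i\in[k]$ the lemma yields a set $Y_i$ that is a union of classes of some $\cP_{N_i}$ with $\varphi(Q_i \triangle Y_i)<\delta$. Setting $N=\max_i N_i$, all the $Y_i$ are $\cP_n$-measurable for every $n\ge N$. These approximants need not be disjoint, so I disjointify by $Y_i' := Y_i \setminus \bigcup_{j<i}Y_j$ for $i<k$ and $Y_k' := J\setminus\bigcup_{j<k}Y_j$. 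A short set-theoretic check gives the inclusion $Q_i \triangle Y_i' \subseteq \bigcup_{j\le i}(Q_j \triangle Y_j)$, and combined with the subadditivity $\varphi(\bigcup A_j)\le \sum \varphi(A_j)$ (which follows at once from submodularity, monotonicity, and $\varphi(\emptyset)=0$) this yields $\varphi(Q_i\triangle Y_i')\le k\delta$ for every $i$. Letting $\beta\in Q_k(\varphi/\cP_n)$ be the quotient defined by the partition $(Y_1',\dots,Y_k')$, the Lipschitz-type inequality $|\varphi(A)-\varphi(B)|\le \varphi(A\triangle B)$ (direct consequence of $\varphi(A)\le \varphi(A\cup B)\le \varphi(A\setminus B)+\varphi(B)$) applied with $A=\bigcup_{i\in S}Q_i$, $B=\bigcup_{i\in S}Y_i'$ and another use of subadditivity gives
\[
|\alpha(S)-\beta(S)|\le \sum_{i\in S}\varphi(Q_i\triangle Y_i')\le k^2\delta
\qquad(S\subseteq[k]).
\]

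This supplies pointwise approximation. To upgrade to uniform Hausdorff approximation, I use that $\varphi$ is bounded, so $Q_k(\varphi)\subseteq [0,\varphi(J)]^{2^k}$ is totally bounded; pick a finite $(\varepsilon/2)$-net $\{\alpha_1,\dots,\alpha_m\}\subseteq Q_k(\varphi)$ and apply the pointwise step to each $\alpha_j$ to obtain a common threshold $N$ above which every $\alpha_j$ has an $(\varepsilon/2)$-approximation in $Q_k(\varphi/\cP_n)$; a triangle-inequality argument then gives an $\varepsilon$-approximation for every element of $Q_k(\varphi)$. The main obstacle is controlling the propagation of error under disjointification, where the combination of monotonicity (for $|\varphi(A)-\varphi(B)|\le \varphi(A\triangle B)$) and submodularity (for subadditivity) is essential; continuity from above of $\varphi$ enters only through Lemma~\ref{LEM:AA-APPROX}.
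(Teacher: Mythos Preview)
Your proof is correct and follows essentially the same approach as the paper: approximate each class of an arbitrary $k$-partition via Lemma~\ref{LEM:AA-APPROX}, disjointify, and control the error through subadditivity and the inequality $|\varphi(A)-\varphi(B)|\le\varphi(A\triangle B)$. The only notable difference is that you explicitly secure a uniform threshold $n$ over all $\alpha\in Q_k(\varphi)$ via a finite $(\varepsilon/2)$-net, whereas the paper glosses over this point (it can alternatively be handled using the monotonicity $Q_k(\varphi/\cP_n)\subseteq Q_k(\varphi/\cP_{n+1})$ that comes from the refining partitions).
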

\begin{proof}
Let $\cA_n$ denote the finite set-algebra generated by $\cP_n$. Since the partitions form a refining sequence, $\cA_n\subseteq \cA_{n+1}$ clearly holds. If we set $\cA=\cup_n\cA_n$, then $\cA$ is a countable set-algebra that generates $\cB$. Let $\fg_n=\fg/\cP_n$. We claim that $\fg_n \rightarrowtail \fg$. 

We start with the observation that for any $k\in\N$, we have
\begin{equation}\label{EQ:QUOTIENT-SUB}
Q_k(\fg_n)\subseteq Q_k(\fg).
\end{equation}
This follows immediately from the fact that $\fg_n$ is a quotient of $\fg$. 

Let $\cP_n=(Z_1,\dots,Z_k)$. By Lemma~\ref{LEM:AA-APPROX}, there are sets $X_1,\dots,X_k\in\cA$ such that $\fg(X_i\triangle Z_i)<\eps/(2k^3)$. Our goal is to modify the sets $X_i$ ``a little'' to get a partition. Recall that $\fg$ is assumed to be increasing, submodular with $\fg(\emptyset)=0$; we will use these properties without explicitly referring to them. Note that
$J\setminus(\cup_{i=1}^k X_i) \subseteq \cup_{i=1}^k (Z_i\setminus X_i)$, and hence
\begin{equation*}
\fg\left(J\setminus(\cup_{i=1}^k X_i)\right) \leq \fg\left(\cup_{i=1}^k (Z_i\setminus X_i)\right)\le \sum_{i=1}^k \fg(Z_i\setminus X_i)\le \frac\eps{2k^2}.
\end{equation*}
So, by replacing $X_k$ with $X_k\cup\big(J\setminus(\cup_{i=1}^k X_i)\big)$, we may assume that $\cup_{i=1}^k X_i=J$, at the cost of increasing the error bound from $\eps/(2k^3)$ to $\eps/(2k^2)$. Define $Y_1=X_1$, and for $1<i\leq k$, let $Y_i=X_i\setminus(Y_1\cup\dots\cup Y_{i-1})$. The sets $Y_1,\dots,Y_k$ are clearly disjoint and form a partition $\cY$ of $J$ into sets in $\cA$. By the definition of $\cA$, this means that $\cY$ is a partition of $J$ into sets in $\cA_n$ for every sufficiently large $n$, and so $\fg/\cY\in Q_k(\fg_n)$ for sufficiently large $k$ and $n\ge k$. By the definition of $Y_i$, we have
\begin{equation*}
\fg(Y_i\triangle X_i)=\fg(X_i\setminus Y_i)  = 
\fg\bigl(X_i\cap(X_1\cup\dots\cup X_{i-1})\bigr) \le\sum_{j=1}^{i-1} \fg(X_i\cap X_j).    
\end{equation*}
Since $Z_i\cap Z_j=\emptyset$ for $i\neq j$, we have $X_i\cap X_j\subseteq (X_i\setminus Z_i) \cup(X_j\setminus Z_j)$, this can be further bounded as
\begin{equation*}
\fg(Y_i\triangle X_i)\le i\cdot \fg(X_i\setminus Z_i) + \sum_{j=1}^{i-1} \fg(X_j\setminus Z_j)\leq (2k-1)\cdot\frac{\eps}{2k^2}.    
\end{equation*}
This implies
\begin{equation*}
\fg(Y_i\triangle Z_i)\le \fg(Y_i\triangle X_i) + \fg(X_i\triangle Z_i)
\le (2k-1)\cdot\frac\eps{2k^2}+\frac \eps{2k^2}
= \frac\eps{k}.
\end{equation*}
Therefore, for every $I\subseteq[k]$, we get
\begin{equation*}
|\fg(\cup_{i\in I} Z_i)-\fg(\cup_{i\in I}Y_i)| \le \sum_{i\in I} \fg(Y_i\triangle Z_i)\le \eps.    
\end{equation*}
In other words, the quotients $\fg/\cZ$ and $\fg/\cY$ satisfy $\|\fg/\cZ-\fg/\cY\|\le \eps$. Since $\fg/\cZ$ is an arbitrary quotient in $Q_k(\fg)$ while $\fg/\cY$ is in $Q_k(\fg_n)$, together with~\eqref{EQ:QUOTIENT-SUB} we obtain that
\begin{equation*}
\dH{k}(Q_k(\fg),Q_k(\fg_n))\le\eps.    
\end{equation*}
This completes the proof of the theorem.
\end{proof}

\paragraph{Acknowledgement.} The authors are grateful to Miklós Abért, Boglárka Gehér, András Imolay, Balázs Maga, Tamás Titkos and Dániel Virosztek for helpful discussions. 

Márton Borbényi was supported by the \'{U}NKP-23-3 New National Excellence Program of the Ministry for Culture and Innovation from the source of the National Research, Development and Innovation Fund. László Márton Tóth was supported by the National Research, Development and Innovation Fund -- grant number  KKP-139502. The research was supported by the Lend\"ulet Programme of the Hungarian Academy of Sciences -- grant number LP2021-1/2021, and by Dynasnet European Research Council Synergy project -- grant number ERC-2018-SYG 810115.


\bibliographystyle{abbrv}
\bibliography{convergence}

\begin{thebibliography}{10}

\bibitem{bensch2001recurrence}
I.~Benjamini and O.~Schramm.
\newblock Recurrence of distributional limits of finite planar graphs.
\newblock {\em Electronic Journal of Probability}, 6(23):1--13, 2001.

\bibitem{bblt2024graphing}
K.~B{\'e}rczi, M.~Borb{\'e}nyi, L.~Lov{\'a}sz, and L.~M. T{\'o}th.
\newblock Cycle matroids of graphings: From convergence to duality.
\newblock {\em arXiv preprint arXiv2406.08945}, 2024.

\bibitem{bgils2024decomposition}
K.~B\'erczi, B.~Gehér, A.~Imolay, L.~Lovász, and T.~Schwarcz.
\newblock Monotonic decompositions of submodular set functions.
\newblock {\em arXiv preprint arXiv:2406.04728}, 2024.

\bibitem{bckl2013leftandright}
C.~Borgs, J.~T. Chayes, J.~Kahn, and L.~Lov\'asz.
\newblock Left and right convergence of graphs with bounded degree.
\newblock {\em Random Structures and Algorithms}, 42:1--28, 2013.

\bibitem{BCLSV1}
C.~Borgs, J.~T. Chayes, L.~Lov{\'a}sz, V.~T. S{\'o}s, and K.~Vesztergombi.
\newblock Convergent sequences of dense graphs {I}: {S}ubgraph frequencies,
  metric properties and testing.
\newblock {\em Advances in Mathematics}, 219(6):1801--1851, 2008.

\bibitem{BCLSV2}
C.~Borgs, J.~T. Chayes, L.~Lov{\'a}sz, V.~T. S{\'o}s, and K.~Vesztergombi.
\newblock Convergent sequences of dense graphs {II}. {M}ultiway cuts and
  statistical physics.
\newblock {\em Annals of Mathematics}, pages 151--219, 2012.

\bibitem{choquet1954theory}
G.~Choquet.
\newblock Theory of capacities.
\newblock In {\em Annales de l'institut Fourier}, volume~5, pages 131--295,
  1954.

\bibitem{Denn}
D.~Denneberg.
\newblock {\em Non-additive measure and integral}, volume~27 of {\em Theory and
  Decision Library. Series B: Mathematical and Statistical Methods}.
\newblock Kluwer Academic Publishers Group, Dordrecht, 1994.

\bibitem{kardovs2017first}
F.~Kardoš, D.~Král’, A.~Liebenau, and L.~Mach.
\newblock First order convergence of matroids.
\newblock {\em European Journal of Combinatorics}, 59:150--168, 2017.

\bibitem{kechris2012classical}
A.~Kechris.
\newblock {\em Classical descriptive set theory}, volume 156.
\newblock Springer Science \& Business Media, 2012.

\bibitem{lovasz2012large}
L.~Lov{\'a}sz.
\newblock {\em Large networks and graph limits}, volume~60.
\newblock American Mathematical Society, 2012.

\bibitem{lovasz2023matroid}
L.~Lov{\'a}sz.
\newblock The matroid of a graphing.
\newblock {\em arXiv preprint arXiv:2311.03868}, 2023.

\bibitem{lovasz2023submodular}
L.~Lov{\'a}sz.
\newblock Submodular setfunctions on sigma-algebras.
\newblock {\em arXiv preprint arXiv:2302.04704}, 2023.

\bibitem{oxley2011matroid}
J.~Oxley.
\newblock {\em Matroid Theory}, volume~21 of {\em Oxford Graduate Texts in
  Mathematics}.
\newblock Oxford University Press, Oxford, second edition, 2011.

\bibitem{Schrijver}
A.~Schrijver.
\newblock {\em Combinatorial Optimization: Polyhedra and Efficiency}.
\newblock Springer, 2003.

\bibitem{tutte1958homotopy}
W.~T. Tutte.
\newblock A homotopy theorem for matroids {I}, {II}.
\newblock {\em Transactions of the American Mathematical Society},
  88(1):144--172, 1958.

\bibitem{Welsh}
D.~Welsh.
\newblock {\em Matroid Theory}, volume~8 of {\em Londom Mathematical Society
  Monographs}.
\newblock Academic Press, 1976.

\end{thebibliography}

\end{document}